\def\namedlabel#1#2{\begingroup
	#2%
	\def\@currentlabel{#2}%
	\phantomsection\label{#1}\endgroup
}
\def\BState{\State\hskip-\ALG@thistlm}
\theoremstyle{plain}
\newtheorem{theorem}{Theorem}[section]
\newtheorem{lemma}{Lemma}[section]
\newtheorem{corollary}{Corollary}[section]
\newtheorem{proposition}{Proposition}[section]
\newtheorem{remark}{Remark}[section]
\theoremstyle{definition}
\newcommand{\Pcal}{\mathcal{P}}
\newcommand{\Fcal}{\mathcal{F}}
\newcommand{\Gcal}{\mathcal{G}}
\newcommand{\Lcal}{\mathcal{L}}
\newcommand{\Mcal}{\mathcal{M}}
\newcommand{\Ncal}{\mathcal{N}}
\newcommand{\Rbb}{\mathbb{R}}
\newcommand{\Nbb}{\mathbb{N}}  % \Nbb = \{1,2,3,\ldots\}
\newcommand{\Pbb}{\mathbb{P}}
\newcommand {\Id}{I}
\newcommand{\Frechet}{Fr\'echet }
\newcommand{\diff}{\,\textnormal{d}}
\newcommand{\CovSpace}{\mathbb{K}}
\newcommand{\CovSpacePos}{\mathbb{K}_{+}}
\newcommand{\SymSpace}{\mathbb{S}}
\newcommand{\id}{\textnormal{id}}
\newcommand{\Hilbert}{\mathcal{H}}
\newcommand{\BW}{\Pi}
\newcommand{\effdom}{\mathcal{D}_P}
\DeclareMathOperator*{\argmin}{arg\,min} 
\DeclareMathOperator*{\trace}{tr} 
\DeclareMathOperator{\proj}{proj}
\DeclareMathOperator{\wass}{\mathcal{W}_{\textnormal{ac}}}
\newcommand{\weak}{\tau_{\textnormal{w}}}
\newcommand{\bary}{M^{\ast}}
\newcommand{\empbary}{M_n^{\ast}}
\def\namedlabel#1#2{\begingroup
  #2%
  \def\@currentlabel{#2}%
  \phantomsection\label{#1}\endgroup
}
\title[Large Deviations Principle for Bures-Wasserstein Barycenters]{Large Deviations Principle for~\\Bures-Wasserstein Barycenters}
\author{Adam Quinn Jaffe}
\address[AQJ]{Department of Statistics, Columbia University, New York, USA}
\author{Leonardo V. Santoro}
\address[LVS]{Institut de Math\'ematiques, École Polytechnique Fédérale de Lausanne, Lausanne, Switzerland}
\email{a.q.jaffe@columbia.edu}
\email{leonardo.santoro@epfl.ch}
\thanks{This material is based upon work for which LVS was supported by a Swiss National Science Foundation (SNF) grant.}
\subjclass[2020]{49Q22, 60F10, 62R30}
\keywords{barycenter, Bures-Wasserstein space, covariance operators, Cr\'amer theorem, exponential tilting, Fr\'echet mean, large deviations theory, optimal transport, Riemannian manifold, Wasserstein space}
\begin{document}

\begin{abstract}
    We prove the large deviations principle for empirical Bures-Wasserstein barycenters of independent, identically-distributed samples of covariance matrices and covariance operators.
    As an application, we explore some consequences of our results for the phenomenon of dimension-free concentration of measure for Bures-Wasserstein barycenters.
    Our theory reveals a novel notion of exponential tilting in the Bures-Wasserstein space, which, in analogy with Cr\'amer's theorem in the Euclidean case, solves the relative entropy projection problem under a constraint on the barycenter.    
    Notably, this method of proof is easy to adapt to other geometric settings of interest; with the same method, we obtain large deviations principles for empirical barycenters in Riemannian manifolds and the univariate Wasserstein space, and we obtain large deviations upper bounds for empirical barycenters in the general multivariate Wasserstein space.
    In fact, our results are the first known large deviations principles for Fr\'echet means in any non-linear metric space.
\end{abstract}

\maketitle

\setcounter{tocdepth}{2}
\tableofcontents

\section{Introduction}

\subsection{Setting}
In varied application areas from imaging to biology, many modern statistical problems involve estimation and inference for data which are \textit{covariance operators} \cite{alexander2005multiple,schwartzman2008false,friston2011functional,dai2019analyzing,panaretos2010second, Pigoli,bhatia2019bures,delBarrioClustering,Gonzalez,Dryden};
these are defined as non-negative, self-adjoint, trace-class operators, but in suitable finite-dimensional settings they are simply equivalent to positive semi-definite matrices.
Because of the non-Euclidean structure of the space $\CovSpace$ of covariance operators, one is typically interested in developing statistical methodologies that are based on some sort of intrinsic geometry of $\CovSpace$; however, there are many interesting geometries that can be placed on this space, each with some advantages and disadvantages, from a statistical point of view \cite{Dryden, Pigoli}.

For many problems, a natural geometry comes from the \textit{Bures-Wasserstein metric} $\BW$ on $\CovSpace$,
\begin{equation}
    \label{eqn:BWdef}
\Pi(\Sigma,\Sigma') := \sqrt{ \trace(\Sigma)+\trace(\Sigma')-2\trace({\Sigma}^{\sfrac{1}{2}}\Sigma^'{\Sigma}^{\sfrac{1}{2}})^{\sfrac{1}{2}}},
\end{equation}
which makes $\CovSpace$ into a complete geodesic metric space of non-negative (Alexandrov) curvature. (See \cite{bhatia2019bures, Masarotto,Kroshnin, Dryden,Pigoli, OlkinPukelsheim, CuestaAlbertos}.)
Interestingly, the Bures-Wasserstein metric has been independently studied in several contexts, through different but equivalent formulations:
In quantum information theory, one restricts attention to the subset of $\Sigma\in\CovSpace$ with $\trace(\Sigma) = 1$, in which case $\BW$ determines a natural distance on \textit{pure states} of a quantum system \cite{bhatia2019bures}.
In statistical shape analysis, one considers the distance between two covariance operators to be the minimal distance between any two possible respective (operator-theoretic) square roots, leading to the so-called \textit{Procrustes distance} \cite{Pigoli}.
In optimal transport (which is perhaps the most important setting for our work), one often restricts the \textit{Wasserstein space} of probability measures to the subspace of Gaussian measures, and in this case the Bures-Wasserstein distance arises from the inherited geometry \cite{Masarotto}; this also endows the Bures-Wasserstein space, formally speaking, with a Riemannian structure.

For a fixed but unknown probability measure $P$ on $\CovSpace$, we define a \textit{population (Bures-Wasserstein) barycenter} to be any solution $\bary$ to the optimization problem
\begin{equation}\label{eqn:pop-bary}
    \begin{cases}
        \textnormal{minimize} & \int_{\CovSpace}\BW^2(M,\Sigma)\diff P(\Sigma) \\
        \textnormal{over} & M\in \CovSpace.
    \end{cases}
\end{equation}
A fundamental statistical problem in the Bures-Wasserstein space is estimating the population barycenter on the basis of independent, identically distributed (IID) samples $\Sigma_1,\Sigma_2,\ldots$ from $P$.
A natural estimator is given by an \textit{empirical (Bures-Wasserstein) barycenter} which is defined to be any solution $\empbary$ to the optimization problem
\begin{equation}\label{eqn:emp-bary}
    \begin{cases}
        \textnormal{minimize} & \frac{1}{n}\sum_{i=1}^{n}\BW^2(M,\Sigma_i) \\
        \textnormal{over} & M\in \CovSpace.
    \end{cases}
\end{equation}
Here, the barycenter is a canonical notion of central tendency which extends the notion of Euclidean mean to the non-linear metric space $(\CovSpace, \BW)$; in different parts of the literature, the barycenter is also called the \textit{\Frechet mean}, the \textit{intrinsic mean}, or the \textit{center of mass}.
The definitions above can be generalized mutatis mutandis to abstract metric spaces \cite{frechet1948elements}, and much is known about the convergence of the empirical barycenter to the population barycenter \cite{EvansJaffeSLLN,Bhattacharya2003,Bhattacharya2005,WassersteinBarycenter,le2017existence}.

There are many probabilistic limit theorems for Bures-Wasserstein barycenters which are counterparts to the usual results in Euclidean spaces.
Indeed, one has the strong law of large numbers (SLLN) \cite{WassersteinBarycenter, PanaretosSantoroBW}, the central limit theorem (CLT) \cite{PanaretosSantoroBW, Kroshnin} and various finite-sample concentration inequalities \cite{Kroshnin, AGP, WassersteinConvergence}.
The goal of this paper is to develop a different concept of probabilistic convergence, namely the rate of decay of rare events related to the convergence of $\empbary$ to $\bary$; this is of course the domain of large deviations theory.

\subsection{Results and Outline}
Our main result (Theorem~\ref{thm:main}) shows that, for any Borel set $E\subseteq \CovSpace$, we have
\begin{equation*}
    \begin{split}
        -\inf\{I_P(M): M\in E^{\circ}\} &\le \liminf_{n\to\infty}\frac{1}{n}\log \Pbb(\empbary\in E) \\
        &\le \limsup_{n\to\infty}\frac{1}{n}\log \Pbb(\empbary\in E) \le -\inf\{I_P(M): M\in \bar E\},
    \end{split}
\end{equation*}
where $E^{\circ}$ and $\bar E$ denote the interior and closure of $E$ with respect to $\BW$, respectively, and the function $I_P:\CovSpace\to [0,\infty]$ is defined via
\begin{equation*}
         I_P(M) = \sup_{A\in\SymSpace_M}\left(\trace(AM) - \log\int_{\CovSpace}\exp \trace\left(AM t_{M}^{\Sigma}\right)\diff P(\Sigma)\right),
     \end{equation*}
     for $M\in\CovSpace$; here, $\SymSpace_M$ is the set of all self-adjoint operators on $\Hilbert$ satisfying $\|M^{\sfrac{1}{2}}A\|_2 < \infty$, and $t_M^{\Sigma}$ is the optimal transport map $M$ to $\Sigma$. (See Subsection~\ref{subsec:prep} for precise definitions and geometric interpretations.)
     In classical terms, we prove that the empirical Bures-Wasserstein barycenters satisfy a large deviations principle with good rate function $I_P$.
     This can be seen as an analog of the many large deviations results for linear means, including Cramer's theorem \cite[Sections~2.2 and 2.3]{dembo2009large}, Sanov's theorem \cite[Theorem~6.2.10]{dembo2009large}, Schilder's theorem \cite[Theorem~5.2.3]{dembo2009large}, and more \cite{Bahadur79}.
    In fact, our result is the first ever large deviations principle for barycenters in any non-linear metric space.

    For the uninitiated reader, we note that the large deviations principle can be understood in more concrete terms for suitable sets $E\subseteq \CovSpace$.
    That is, if $E$ is equal to the closure of its interior, then we have
    \begin{equation*}
        \Pbb(\empbary\in E) \approx \exp(-I_P(E)n)
    \end{equation*}
    for large $n\in\Nbb$, ignoring sub-exponential factors, and where
    \begin{equation}\label{eqn:numerical}
        I_P(E):=\inf\{I_P(M):M\in E\}.
    \end{equation}
    In other words, the minimization of the rate function $I_P$ exactly determines the exponential rate of decay of rare events related to the convergence $M_n^{\ast} \to M^{\ast}$, as $n\to\infty$.
    Because of this, one may be interested in numerically solving \eqref{eqn:numerical}, and we comment on this in Subsection~\ref{subsec:numerical}.

    The new insight (Proposition~\ref{prop:tilting}) that allows us to tackle the large deviations theory is noticing that $I_P(M)$ can be equivalently formulated in terms of the following relative entropy projection problem with barycenter constraint,
    \begin{equation*}
        \begin{cases}
			\textnormal{minimize} &H(Q\, | \, P) \\
			\textnormal{over } & Q\in \Pcal_2(\CovSpace) \\
			\textnormal{with} & Q \textnormal{ has barycenter } M,
		\end{cases}
    \end{equation*}
    where $H(Q\,|\,P)$ denotes the relative entropy (also called the Kullback-Liebler divergence) of $Q$ from $P$.
    In fact, we show that this problem is solved by a suitable transformation of $P$ which mirrors the usual Euclidean notion of exponential tilting.
    This is the key ingredient of the proof of our main result (Theorem~\ref{thm:main}) and it furthermore allows us to establish some fundamental properties of the rate function $I_P$ (Proposition~\ref{prop:rate-function-properties}).

    Our general results have interesting consequences for the study of rare events related to the convergence of empirical Bures-Wasserstein barycenters; we give two examples of this which are focused on the phenomenon of concentration of measure for barycenters.
    Indeed, we show (Corollary~\ref{cor:Hoeffding}) that the empirical barycenters asymptotically satisfy a dimension-free Hoeffding-type inequality and we characterize (Corollary~\ref{cor:degenerate}) the exact mechanism by which large distances $\BW(M_n^{\ast},M^{\ast})$ may occur.
    
    Lastly, we mention that our novel method of proof can be applied to prove large deviations results for barycenters in other geometric settings, and we give a few examples of this in Section~\ref{sec:ext}.
    Indeed, it yields a bona fide large deviations principle for barycenters in Riemannian manifolds and for barycenters in the univariate Wasserstein space, as well as large deviations upper bounds for the multivariate Wasserstein space.

\subsection{Related Literature}

Our results contribute to a rich literature on probabilistic limit theorems for Bures-Wasserstein barycenters \cite{WassersteinBarycenter,WassersteinConvergence, Kroshnin,Masarotto,BW_GD,
alvarez2016fixed,maunu2023bures,PanaretosSantoroBW}.
This can be seen as a special case of the literature on Wasserstein barycenters \cite{WassersteinBarycenter,WassersteinConvergence,AlvarezEstebanAggregate} or more generally of the literature on Fr\'echet means in abstract metric spaces \cite{EvansJaffeSLLN,schotz2022strong,sturm2003probability, ziezold1977expected, Sverdrup, Brunel, AGP}.
In the finite-dimensional setting, our results can be seen as a special case of the literature of barycenters on Riemannian manifolds \cite{Bhattacharya2003, Bhattacharya2005, kendall2011limit} or more generally of the literature of barycenters in stratified spaces \cite{Huckemann, BookCLTs, UnlabeledGraphs}.

We emphasize that our main result is novel in all of the above contexts.
Indeed, known results on Fr\'echet means or barycenters of IID samples include laws of large numbers \cite{ziezold1977expected, EvansJaffeSLLN, Sverdrup, schotz2022strong, Bhattacharya2003, PanaretosSantoroBW, UnlabeledGraphs}, central limit theorems \cite{agueh2017vers, Bhattacharya2003,  Bhattacharya2005, PanaretosSantoroBW, BookCLTs, Huckemann, FrechetCLTs}, concentration inequalities \cite{Kroshnin, Brunel}, rates of convergence \cite{WassersteinConvergence, SchoetzQuad, AGP} and more, but large deviations theory has been apparently overlooked so far.
The only result we are aware of is \cite[Theorem~4.6]{EvansJaffeSLLN} by the first author, which provides large deviations upper bounds for Fr\'echet means on compact metric spaces via straightforward continuity arguments; however, the form of the rate function therein is too complicated to yield any meaningful probabilistic or geometric insights.
(One also has the seemingly-related work \cite{kraaij2019classical}, but its focus is large deviations theory for suitably-scaled random walks rather than for barycenters.)
As a general comment, it is arguably surprising that no large deviations results were known even in the simplest case of, say, barycenters on finite-dimensional Riemannian manifolds of non-positive curvature.

Additionally, our method of proof distinguishes itself from existing methods.
To explain this, let us recall the case of barycenters in finite-dimensional Riemannian manifolds and barycenters in the Wasserstein and Bures-Wasserstein spaces, in which case one has a suitable notion of ``linearization'' around each point; in these settings, most results \cite{WassersteinConvergence, Kroshnin,PanaretosSantoroBW, Brunel, AGP, Bhattacharya2003, Bhattacharya2005, kendall2011limit, BookCLTs, Huckemann, FrechetCLTs, carlier2021entropic} are proven by lifting the empirical barycenter to the ``tangent space'' at the population barycenter, so that one can apply standard probabilistic techniques in a fixed Euclidean setting.
While this strategy makes sense for analysis of the \textit{typical} behavior of the empirical barycenter, it does not make sense for analyses of their \textit{atypical} behaviors.
Necessarily, our probabilistic results implicitly concern all tangent spaces simultaneously.

Lastly, our results may be interesting from the point of view of large deviations theory itself.
Our main theorem is an example of the notoriously difficult situation where one seeks a concrete and interpretable large deviations theory for a non-linear and possibly discontinuous function of IID random variables.
Indeed, while large deviations theory for linear functions of IID Euclidean random variables is an extensively studied topic, there are no general tools for large deviations of nonlinear functionals (see \cite[Section~1.2: ``The Problem with Non-Linearity'']{chatterjee2017large}), especially in infinite-dimensional spaces.

\section{Main Results}\label{sec:main}

In this section we will state and prove the main results of the paper.
We begin with Subsection~\ref{subsec:prep} in which we give some definitions and notations and study properties of the barycenter map in the Bures-Wasserstein space, as well as Subsection \ref{subsec:assump} in which we give and discuss the relevant assumptions we need to establish our results.
In Subsection~\ref{subsec:exp-tilt} we develop a fundamental notion of exponential tilting for probability measures on $(\CovSpace,\BW)$, and in Subsection~\ref{subsec:LDP} we use this to establish our main results.
Then, we study some consequence of the main results in Subsection~\ref{subsec:consequences} which focuses mainly on the phenomenon of concentration of measure Bures-Wasserstein barycenters.
Lastly, we give some remarks about numerical implementation and minimization of the rate function $I_P$ in Subsection~\ref{subsec:numerical}.

\subsection{Preliminaries}\label{subsec:prep}
%\paragraph{\textsc{Covariances and Bures-Wasserstein space.}}
Let $\Hilbert$ be some real separable Hilbert space with $\mathrm{dim}(\Hilbert) \in \Nbb\cup\{\infty\}$. Denote the set of real, self-adjoint operators by $\SymSpace$, and the Hilbert-Schmidt norm on $\SymSpace$ by
 $$
 \|A\|_{2}:= \sqrt{\trace(A^{\top}A)},
 $$
 which is induced by the inner product
 $$
\langle A,A'\rangle_{2} := \trace(A^{\top}A').
$$
Also denote the trace norm by
 $$
 \|A\|_{1}:= \trace(\sqrt{A^{\top}A}).
 $$
 We say that $A\in\SymSpace$ is \textit{Hilbert-Schmidt} if $\|A\|_2 < \infty$ and \textit{trace class} if $\|A\|_1<\infty$.
For $A\in\SymSpace$ we write $A\succeq 0$ and say that $A$ is \textit{non-negative} if $\langle h ,Ah \rangle \geq 0$ for all vectors $h\in\Hilbert$, and we write $A\succ 0$ and say that $A$ is \textit{positive} (or \textit{regular}) if the inequality is strict for all $h\neq 0$.

\medskip

We write $\CovSpace$ for the set of real, self-adjoint, non-negative, trace class operators on $\Hilbert$ (that is, the space of covariances on $\Hilbert$), and we write $\CovSpacePos\subseteq\CovSpace$ for the subspace of positive covariances.
Note that, if $d:=\dim(\Hilbert)<\infty$, then $\CovSpace$ is just the set of real, symmetric, positive semi-definite $d\times d$ matrices, and $\CovSpacePos\subseteq\CovSpace$ is the subspace of positive definite matrices.

\medskip

The Bures-Wasserstein metric $\BW$ between covariances $\Sigma,\Sigma'\in \CovSpace$, defined in \eqref{eqn:BWdef} may be  equivalently represented as the Wasserstein distance between the corresponding centered Gaussian measures, $\Ncal(0,\Sigma), \Ncal(0,\Sigma')$, respectively.
This connection to optimal transport yields an important perspective which will later allow for some geometric interpretations of our results: indeed, it is known that the Wasserstein space admits a manifold-like structure formulated in terms of optimal transport maps (the so-called Otto calculus \cite{otto2001geometry}), which is in turn inherited by the Bures-Wasserstein space. In more detail,
for $M\in \CovSpacePos$,  the space:
\begin{equation}
    \label{eqn:tanspace}
\SymSpace_M := \{ A \in \SymSpace\::\: \|M^{\sfrac{1}{2}}A\|_2<\infty\}.
\end{equation}
 is a Hilbert space, if endowed with the inner product:
$$
\langle A, B\rangle_M = \trace(AMB),
$$
% This is a Hilbert space, with associated inner product given by
% To each $M\in \CovSpace$ we may associate a Hilbert space
% \begin{equation}
%     \label{eqn:tanspace}
%      (\SymSpace_M,\langle\,\cdot\,,\,\cdot\,\rangle_M), \qquad \langle A, B\rangle_M = \trace(AMB)
% \end{equation}
and the pair $(\SymSpace_M,\langle\,\cdot\,,\,\cdot\,\rangle_M)$ is sometimes formally referred to as  \textit{tangent space at $M$}; see \cite{Masarotto, takatsu2010wasserstein}. In fact, the correspondence:
\begin{equation}\label{eqn:log}
    \ell_M\::\: 
    (\CovSpace,\BW) \to (\SymSpace_M,\langle\,\cdot\,,\,\cdot\,\rangle_M),\qquad \Sigma \mapsto t_{M}^{\Sigma} - \Id
\end{equation}
 is an embedding of $\CovSpace$ into the Hilbert space $\SymSpace_M$, for any $M\succ 0$.
Here, the operator $ t_{M}^{\Sigma}$ denotes the \textit{optimal transport map} between $\Ncal(0,M)$ and $\Ncal(0,\Sigma)$; see \cite{Villani}. In our context, this can be expressed as
\begin{equation}
    \label{eqn:optmap}
    t_{M}^{\Sigma} := M^{-\sfrac{1}{2}}(M^{\sfrac{1}{2}}\Sigma M^{\sfrac{1}{2}})^{\sfrac{1}{2}}{M}^{-\sfrac{1}{2}}.
\end{equation}
which is well-defined on a subspace of $\Hilbert$ which simultaneously contains the range of $M^{\sfrac{1}{2}}$ and has full probability under $\Ncal(0,M)$; however, the operator $t_{M}^{\Sigma}$ can generally be unbounded \cite[Proposition 2.2]{CuestaAlbertos}. 
One also has the following equivalent expression for $\Pi$
\begin{equation}
    \label{eqn:BWequivMaps}
\Pi(M,\Sigma) := \|(t_M^{\Sigma}-\Id) \|_{M},
% = \|(t_\Fcal^\Gcal-\Id)\Fcal (t_\Fcal^\Gcal-\Id) \|_1^{\nicefrac{1}{2}} 
\end{equation}
provided that an optimal map exists.
 
The embedding \eqref{eqn:log} admits an inverse given by
\begin{equation}\label{eqn:exp}
    e_M\::\: (\SymSpace_M,\langle\,\cdot\,,\,\cdot\,\rangle_M)\to (\CovSpace,\BW),\qquad A \mapsto (A+I)M(A+I).
\end{equation}
In some contexts, these maps are referred to as the \textit{logarithm} map and \textit{exponential} map, respectively, which highlights the manifold-like structure of the Bures-Wasserstein space.
See \cite{Masarotto} for further detail.

\medskip

We begin by establishing some useful estimates that we will need throughout the rest of the paper. 

\begin{lemma}
	\label{lemma:bounds}
	For any $M,\Sigma\in\CovSpacePos$, we have
        \begin{equation}\label{eqn:tracebound}
		\trace(Mt_{M}^{\Sigma}) = \trace\left((M^{\sfrac{1}{2}}\Sigma M^{\sfrac{1}{2}})^{1 / 2}\right) \le \BW(M,0)\BW(\Sigma,0)
	\end{equation}
        and for $A\in\SymSpace_M$ we have
        \begin{equation}\label{eqn:integrand-bound}
            |\trace(AM(t_{M}^{\Sigma}-\Id))| \le\|A\|_2\BW(M,0)\BW(M,\Sigma).
        \end{equation}
\end{lemma}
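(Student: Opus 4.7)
\medskip

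The plan is to handle each inequality by reducing to a standard Cauchy--Schwarz (or Hölder) estimate in Hilbert--Schmidt norm, after unwrapping the explicit expression \eqref{eqn:optmap} for $t_M^{\Sigma}$ and the tangent-space formula \eqref{eqn:BWequivMaps} for $\BW$.

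For the equality in \eqref{eqn:tracebound}, I would plug \eqref{eqn:optmap} into $\trace(Mt_M^{\Sigma})$ and cancel the outer factors of $M^{\pm \sfrac{1}{2}}$ using cyclicity of the trace; this is a direct calculation that immediately yields $\trace(Mt_M^{\Sigma}) = \trace((M^{\sfrac{1}{2}}\Sigma M^{\sfrac{1}{2}})^{\sfrac{1}{2}})$. For the inequality, the key observation is that
\[
\trace\bigl((M^{\sfrac{1}{2}}\Sigma M^{\sfrac{1}{2}})^{\sfrac{1}{2}}\bigr) = \bigl\|M^{\sfrac{1}{2}}\Sigma^{\sfrac{1}{2}}\bigr\|_{1},
\]
since the right-hand side equals the trace of the unique non-negative square root of $(M^{\sfrac{1}{2}}\Sigma^{\sfrac{1}{2}})(M^{\sfrac{1}{2}}\Sigma^{\sfrac{1}{2}})^{\top} = M^{\sfrac{1}{2}}\Sigma M^{\sfrac{1}{2}}$. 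The trace-class Hölder inequality $\|XY\|_1 \le \|X\|_2\|Y\|_2$ then gives $\|M^{\sfrac{1}{2}}\Sigma^{\sfrac{1}{2}}\|_1 \le \|M^{\sfrac{1}{2}}\|_2\|\Sigma^{\sfrac{1}{2}}\|_2 = \sqrt{\trace(M)}\sqrt{\trace(\Sigma)}$, which is precisely $\BW(M,0)\BW(\Sigma,0)$ since $\BW(\cdot,0) = \sqrt{\trace(\cdot)}$.

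For \eqref{eqn:integrand-bound}, the idea is to factor an $M^{\sfrac{1}{2}}$ onto each side of the product inside the trace. Using trace cyclicity and the Hilbert--Schmidt inner product representation $\trace(B^{\top}C) = \langle B,C\rangle_2$, I would rewrite
\[
\trace\bigl(AM(t_M^{\Sigma}-\Id)\bigr) = \bigl\langle M^{\sfrac{1}{2}}A,\, M^{\sfrac{1}{2}}(t_M^{\Sigma}-\Id)\bigr\rangle_{2}.
\]
Cauchy--Schwarz in $\langle \cdot,\cdot\rangle_2$ then yields $|\trace(AM(t_M^{\Sigma}-\Id))| \le \|M^{\sfrac{1}{2}}A\|_2 \cdot \|M^{\sfrac{1}{2}}(t_M^{\Sigma}-\Id)\|_2$, and the second factor is exactly $\BW(M,\Sigma)$ by \eqref{eqn:BWequivMaps}. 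For the first factor, I would bound $\|M^{\sfrac{1}{2}}A\|_2^2 = \trace(AMA) \le \|A\|_{\mathrm{op}}^2 \trace(M) \le \|A\|_2^2\, \BW(M,0)^2$, using the standard inequality $\|A\|_{\mathrm{op}} \le \|A\|_2$.

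I do not expect any significant obstacle here; the only subtlety is ensuring that every trace manipulation is justified under the given integrability assumptions ($A \in \SymSpace_M$, so $\|M^{\sfrac{1}{2}}A\|_2 < \infty$, and $M,\Sigma \in \CovSpacePos$ so the optimal map $t_M^{\Sigma}$ exists in the sense of \eqref{eqn:optmap}), but this is automatic from the definitions given in Subsection~\ref{subsec:prep}.
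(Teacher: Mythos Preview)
Your proposal is correct. For \eqref{eqn:integrand-bound} it is essentially the paper's argument: Cauchy--Schwarz in the Hilbert--Schmidt inner product, followed by the sub-multiplicativity bound $\|AM^{\sfrac{1}{2}}\|_2 \le \|A\|_2\|M^{\sfrac{1}{2}}\|_2 = \|A\|_2\,\BW(M,0)$. For \eqref{eqn:tracebound} the routes diverge. The paper expands $\trace((M^{\sfrac{1}{2}}\Sigma M^{\sfrac{1}{2}})^{\sfrac{1}{2}})$ in an orthonormal system, bounds each diagonal term via $\langle M^{\sfrac{1}{2}}\Sigma M^{\sfrac{1}{2}}e_i,e_i\rangle \le \trace(\Sigma)\langle Me_i,e_i\rangle$, and resums. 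You instead recognize the identity $\trace((M^{\sfrac{1}{2}}\Sigma M^{\sfrac{1}{2}})^{\sfrac{1}{2}}) = \|M^{\sfrac{1}{2}}\Sigma^{\sfrac{1}{2}}\|_1$ and apply the Schatten-norm H\"older inequality $\|XY\|_1 \le \|X\|_2\|Y\|_2$ in one step. Your route is shorter, basis-free, and lands directly on $\sqrt{\trace(M)}\sqrt{\trace(\Sigma)} = \BW(M,0)\BW(\Sigma,0)$; the paper's expansion is more hands-on but requires a judicious choice of orthonormal system to make the square-root step legitimate.
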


\begin{proof}
        To prove equation~\eqref{eqn:tracebound}, we first let $\{e_i\}_{i\in\Nbb}$ be a complete orthonormal system (CONS) for $\Hilbert$.
	By definition of matrix square root, we have:
	\begin{equation*}
		\trace\left((M^{\sfrac{1}{2}}\Sigma M^{\sfrac{1}{2}})^{\sfrac{1}{2}}\right) 
  =\sum_{i\in\Nbb}
  \langle (M^{\sfrac{1}{2}}\Sigma M^{\sfrac{1}{2}})^{\sfrac{1}{2}}e_i, ~e_i \rangle  =\sum_{i\in\Nbb}
  \sqrt{\langle M^{\sfrac{1}{2}}\Sigma M^{\sfrac{1}{2}} e_i, ~e_i \rangle}.
	\end{equation*}
	Now note that, for all $i\in\Nbb$, we have
	\begin{equation*}
		\langle M^{\sfrac{1}{2}}\Sigma M^{\sfrac{1}{2}} e_i, ~e_i \rangle = \|  \Sigma^{\sfrac{1}{2}} M^{\sfrac{1}{2}}e_i\|^2 \leq \|\Sigma\|  \|M^{\sfrac{1}{2}}e_i\|^2 \le \trace(\Sigma) \langle M e_i, ~e_i \rangle.
	\end{equation*}
	Also observe that, for any non-negative definite, compact operator $A$, its unique square root $A^{\sfrac{1}{2}}$ satisfies
 $
 \sqrt{\trace(A)} \leq \trace(A^{\sfrac{1}{2}}).
 $
    We thus get
	\begin{align*}
		\trace\left((M^{\sfrac{1}{2}}\Sigma M^{\sfrac{1}{2}})^{1 / 2}\right) &=\sum_{i\in\Nbb} \sqrt{\langle M^{\sfrac{1}{2}}\Sigma M^{\sfrac{1}{2}} e_i, ~e_i \rangle}
        \\
		&\le\sum_{i\in\Nbb} \sqrt{\trace(\Sigma)}\cdot \sqrt{\langle M e_i, ~e_i\rangle } \\
		&\le\sum_{i\in\Nbb} \sqrt{\trace(\Sigma) \langle M e_i, ~e_i\rangle } \\
  &\le\trace\left(M^{\sfrac{1}{2}}\right)\trace\left(\Sigma^{\sfrac{1}{2}}\right) = \BW(M,0)\BW(\Sigma,0).
	\end{align*}
    To prove equation~\eqref{eqn:integrand-bound}, we simply use Cauchy-Schwarz and sub-multiplicativity:
	\begin{align*}
		\trace(AM(t_{M}^{\Sigma}-\Id)) &\le \|AM^{\sfrac{1}{2}}\|_{2}\|(t_{M}^{\Sigma}-\Id)M^{\sfrac{1}{2}}\|_2 \\
		&= \|A\|_2\|M^{\sfrac{1}{2}}\|_{2}\|(t_{M}^{\Sigma}-\Id)M^{\sfrac{1}{2}}\|_2 \\
		&=\|A\|_2\BW(M,0)\BW(M,\Sigma).
	\end{align*}
\end{proof}

Next we develop some analytic preliminaries that will be used the proofs of our main results.
To set this up, let us write $\Pcal(\CovSpace)$ for the space of Borel probability measures on $(\CovSpace,\BW)$.
Then, for $p\ge 1$, we write $\Pcal_p(\CovSpace)\subseteq \Pcal(\CovSpace)$ for the space of all probability measures satisfying $\int_{\CovSpace}\BW^p(\Sigma,0)\diff P(\Sigma) < \infty$, and we write $W_p$ for the $p$-Wasserstein metric on $\Pcal_p(\CovSpace)$.
We also write $\weak$ for the topology of weak convergence on $\Pcal(\CovSpace)$.
We will need the following result, whose proof is standard hence omitted, but details can be found in \cite{Villani}:

\begin{lemma}\label{lem:weak-W2-cpt}
    A set in $\Pcal(\CovSpace)$ is $W_2$-pre-compact if and only if it is $\weak$-pre-compact and $W_2$-bounded.
\end{lemma}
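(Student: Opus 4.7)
The proof splits naturally into the two directions, with the forward direction being immediate and the reverse direction requiring Prokhorov's theorem combined with the standard second-moment characterization of $W_2$-convergence.

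For the forward direction, I would observe that any pre-compact subset of a metric space is bounded, which handles $W_2$-boundedness at once. For the other half, $W_2$-convergence on $\Pcal(\CovSpace)$ is strictly stronger than weak convergence (one way to see this is to approximate bounded continuous test functions by bounded Lipschitz ones and apply the Kantorovich--Rubinstein duality), so the identity map from $(\Pcal_2(\CovSpace), W_2)$ to $(\Pcal(\CovSpace), \weak)$ is continuous and therefore sends pre-compact sets to pre-compact sets.

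For the reverse direction, let $\mathcal{A}$ be both $\weak$-pre-compact and $W_2$-bounded, and take an arbitrary sequence $\{P_n\} \subseteq \mathcal{A}$. My plan is to extract a weakly convergent subsequence and then upgrade the mode of convergence to $W_2$. First, by Prokhorov's theorem, $\weak$-pre-compactness of $\mathcal{A}$ is equivalent to tightness, so I extract $P_{n_k} \to P$ in $\weak$ for some $P \in \Pcal(\CovSpace)$. Second, lower semi-continuity of $Q \mapsto \int_{\CovSpace} \BW^2(\Sigma, 0) \diff Q(\Sigma)$ under weak convergence (an application of Portmanteau or Fatou's lemma, together with the fact that $\BW(\,\cdot\,,0)$ is continuous and non-negative), combined with the $W_2$-boundedness hypothesis, yields $P \in \Pcal_2(\CovSpace)$. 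Finally, I would invoke the standard characterization (see \cite{Villani}) that $W_2$-convergence on $\Pcal_2(\CovSpace)$ is equivalent to weak convergence together with convergence of second moments.

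The main technical point will be this last upgrade from weak to $W_2$-convergence, since it requires not just a uniform bound on $\int \BW^2(\Sigma, 0) \diff P_{n_k}(\Sigma)$ but uniform integrability of $\BW^2(\,\cdot\,, 0)$ along the extracted subsequence; this is precisely the substance of the standard metrization result invoked from \cite{Villani}, which the $W_2$-boundedness hypothesis is to be read as supplying. Once this is in place, the $W_2$-convergent subsequence exists and pre-compactness of $\mathcal{A}$ in $(\Pcal_2(\CovSpace), W_2)$ follows.
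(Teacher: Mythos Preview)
The paper omits the proof entirely, declaring the result standard and pointing to \cite{Villani}, so there is no in-paper argument to compare against. Your outline for the forward direction is fine, and your plan for the reverse direction is the natural one.

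However, there is a genuine gap in the reverse direction, and you have in fact located it yourself before waving it away. You correctly observe that upgrading weak convergence to $W_2$-convergence requires \emph{uniform integrability} of $\BW^2(\,\cdot\,,0)$ along the subsequence, not merely a uniform bound on second moments; you then assert that ``the $W_2$-boundedness hypothesis is to be read as supplying'' this. It does not. The condition $\sup_n \int_{\CovSpace}\BW^2(\Sigma,0)\diff P_n(\Sigma)\le C$ is strictly weaker than $\sup_n \int_{\{\BW(\Sigma,0)>R\}}\BW^2(\Sigma,0)\diff P_n(\Sigma)\to 0$ as $R\to\infty$. A concrete obstruction already appears on a geodesic ray in $(\CovSpace,\BW)$ (take $\dim\Hilbert=1$, so $(\CovSpace,\BW)$ is isometric to $[0,\infty)$ with the Euclidean metric via $\sigma\mapsto\sqrt\sigma$): the measures $P_n=(1-n^{-2})\delta_0+n^{-2}\delta_{n^2}$ satisfy $\int_{\CovSpace}\BW^2(\Sigma,0)\diff P_n(\Sigma)=1$ for all $n$, converge weakly to $\delta_0$, yet $W_2(P_n,\delta_0)=1$ for all $n$, so no subsequence is $W_2$-convergent. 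Thus the ``if'' direction fails under mere $W_2$-boundedness; the correct hypothesis (and what the result in \cite{Villani} actually gives) is $2$-uniform integrability. Your argument does not, and cannot, close this gap without that stronger assumption.
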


Lastly, we study the continuity of the barycenter map.
For each $P\in\Pcal_2(\CovSpace)$, we write
\begin{equation*}
    F(P):=\underset{M\in\CovSpace}{\textnormal{arg min}}\int_{\CovSpace}\BW^2(M,\Sigma)\diff P(\Sigma),
\end{equation*}
for the entire set of minimizers, which can be empty, a singleton, or contain more than one point.
By \cite[Theorem~1]{PanaretosSantoroBW},
the set $F(P)$ is non-empty whenever $P\in \Pcal_2(\CovSpace)$ and it is a singleton whenever $P\{\Sigma: \Sigma\succ 0\} > 0$; by a slight abuse notation, in the uniqueness case we identify $F(P)$ with the unique covariance it contains.
The following is the form of continuity that we need for our later work.

\begin{proposition}\label{prop:basic-cty}
    If $\{Q_n\}_{n\in\Nbb}$ and $Q$ in $\Pcal_2(\CovSpace)$ satisfy $W_2(Q_n,Q)\to 0$, then for any $M_n\in F(Q_n)$ for $n\in\Nbb$, there exists some $\{n_k\}_{k\in\Nbb}$ and $M\in F(Q)$ with $\BW(M_{n_k},M)\to 0$.
\end{proposition}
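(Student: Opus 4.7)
The plan is a standard ``convergence of argmins'' argument: establish $\BW$-precompactness of $\{M_n\}_{n\in\Nbb}$, extract a $\BW$-convergent subsequence $M_{n_k}\to M$, and conclude $M\in F(Q)$ via a $\Gamma$-convergence-type pair of bounds.

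To get $\BW$-boundedness of $\{M_n\}$, I would compare $M_n$ to the admissible competitor $0\in\CovSpace$, which yields
\[
\int_{\CovSpace}\BW^2(M_n,\Sigma)\diff Q_n(\Sigma)\le \int_{\CovSpace}\BW^2(0,\Sigma)\diff Q_n(\Sigma);
\]
since $W_2(Q_n,Q)\to 0$ implies convergence of second moments, the right-hand side is uniformly bounded in $n$, and the triangle and Jensen inequalities then yield a uniform bound on $\BW(M_n,0)$. Next, writing $F_n(M):=\int_{\CovSpace}\BW^2(M,\Sigma)\diff Q_n(\Sigma)$ and $F_\infty(M):=\int_{\CovSpace}\BW^2(M,\Sigma)\diff Q(\Sigma)$, I would show that $F_n\to F_\infty$ uniformly on every $\BW$-bounded subset of $\CovSpace$. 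Taking optimal $W_2$-couplings $\pi_n$ of $(Q_n,Q)$, applying the reverse-triangle bound $|\BW^2(M,\Sigma)-\BW^2(M,\Sigma')|\le \BW(\Sigma,\Sigma')(\BW(M,\Sigma)+\BW(M,\Sigma'))$, and invoking Cauchy--Schwarz under $\pi_n$ would give an estimate of the form $|F_n(M)-F_\infty(M)|\le W_2(Q_n,Q)\cdot C_R$ for any $M$ with $\BW(M,0)\le R$, where $C_R$ depends only on $R$ and on the uniformly bounded second moments of $Q_n$ and $Q$.

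The main obstacle is the precompactness step, i.e., actually extracting a $\BW$-convergent subsequence. In the finite-dimensional case, this is immediate: local compactness of $\CovSpace$ reduces the task to the uniform $\BW$-bound already obtained. In infinite dimensions, $\BW$-bounded sets need not be $\BW$-precompact, so instead I would leverage the $W_2$-precompactness of $\{Q_n\}$: by Lemma~\ref{lem:weak-W2-cpt} the family is $\weak$-tight, so the supports concentrate, up to arbitrarily small $Q_n$-mass, on a common $\BW$-compact subset of $\CovSpace$; the variational optimality of $M_n$ should then transmit this ``effectively finite-dimensional'' structure of the support to the barycenter itself. This transmission is the delicate point and is where I expect the proof to require the most care.

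Finally, given any subsequential limit $M_{n_k}\to M$, the minimality inequality $F_{n_k}(M_{n_k})\le F_{n_k}(M')$, valid for every $M'\in\CovSpace$, passes to the limit as $k\to\infty$ using the uniform convergence of $F_{n_k}$ on a $\BW$-bounded neighborhood of $\{M,M'\}$ together with the $\BW$-continuity of $F_\infty$. This yields $F_\infty(M)\le F_\infty(M')$ for all $M'\in\CovSpace$, and hence $M\in F(Q)$, as desired.
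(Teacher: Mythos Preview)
Your outline is sound in broad strokes, and your treatment of $\BW$-boundedness and of the final $\Gamma$-convergence step is fine (the paper does the latter slightly differently, via the elementary inequality $(a+b)^2\le c_\varepsilon a^2+(1+\varepsilon)b^2$ rather than via uniform convergence of the Fr\'echet functionals, but either works).

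The genuine gap is exactly where you flag it: the precompactness step in infinite dimensions. Your proposed mechanism---tightness of $\{Q_n\}$ gives a common $\BW$-compact set carrying most of the mass, and ``variational optimality should transmit this to the barycenter''---is not an argument, and I do not see how to complete it along those lines. Tightness controls where the \emph{samples} live up to small mass, but the barycenter is a global object; a small residual mass outside a compact set, even with controlled second moment, does not by itself force the barycenter into (or near) that compact set without further structural input. You would need some quantitative stability estimate saying that truncating $Q_n$ to a compact set moves its barycenter by a controlled amount, and no such estimate is available in the generality you are working in.

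The paper resolves this with a structural fact specific to the Bures--Wasserstein geometry: any barycenter $M_n\in F(Q_n)$ satisfies the Loewner domination $M_n\preceq S_n$, where $S_n:=\int_{\CovSpace}\Sigma\,\diff Q_n(\Sigma)$ is the (trace-class) Bochner mean of $Q_n$. Since $W_2(Q_n,Q)\to 0$ forces $\|S_n-S\|_1\to 0$ and hence $\BW(S_n,S)\to 0$, the Bogachev-type criterion for $\BW$-compactness (uniform decay of eigenvalue tails and uniform trace bound for the family $\{S_n\}$) applies, and one concludes that $\{M\in\CovSpace: M\preceq S_n \text{ for some }n\}$ is $\BW$-precompact. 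This is the missing idea: precompactness of $\{M_n\}$ comes not from tightness of the $Q_n$ but from an operator-order sandwich against a convergent sequence of trace-class operators.
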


\begin{proof}
    Note that we have $\BW^2(\Sigma,0) = \|\Sigma\|_1$ for all $\Sigma \in \CovSpace$.
    Thus, $\{Q_n\}_{n\in\Nbb}$ and $Q$ being in $\Pcal_2(\CovSpace)$ implies that the Bochner integrals
    \begin{equation*}
        S_n:=\int_{\CovSpace}\Sigma\diff Q_n(\Sigma) \qquad\textnormal{ for } n\in\Nbb \qquad\textnormal{ and } \qquad S:=\int_{\CovSpace}\Sigma\diff Q(\Sigma)
    \end{equation*}
    are well-defined in the Banach space of trace-class operators on $\Hilbert$.
    Moreover, $W_2(Q_n,Q)\to 0$ implies that we have $\|S_n-S\|_{1}\to 0$ hence also $\BW(S_n,S)\to 0$ as $n\to\infty$.
    Note that, if $\{e_i\}_{i\in\Nbb}$ denotes any complete orthonormal system (CONS) of $\Hilbert$, then $\BW(S_n,S)\to 0$ implies by \cite[Example 3.8.13(iv)]{bogachev1998gaussian} that we have
    \begin{equation*}
        \sup_{n \in \Nbb}\sum_{i\geq k} \langle S_n e_i, e_i \rangle \to 0,
    \end{equation*}
    as $k\to\infty$, and
    \begin{equation*}
        \sup_{n\in\Nbb}\frac{1}{r}{\trace(S_n)}\to 0,
    \end{equation*}
    as $r\to\infty$.
    Hence, as in the proof of \cite[][Lemma 2]{PanaretosSantoroBW}, the set $\{M\in\CovSpace \::\: M\preceq S_n \text{ for some } n\in\Nbb\}$ is $\Pi$-pre-compact.
    Therefore, there exist some $\{n_k\}_{k\in\Nbb}$ and $M\in \CovSpace$ with $\BW(M_{n_k},M)\to 0$, and it only remains to show $M\in F(P)$.
    To do this, let $M'\in\CovSpace$ be arbitrary.
    Now fix $\varepsilon>0$ and recall the elementary fact that there exists some $c_{\varepsilon}>0$ such that we have $(a+b)^2 \le c_{\varepsilon}a^2 + (1+\varepsilon)b^2$ for all $a,b\ge 0$.
    Then we can bound:
    \begin{align*}
        \int_{\CovSpace}\BW^2(M,\Sigma)\diff Q(\Sigma) &= \lim_{k\to\infty}\int_{\CovSpace}\BW^2(M,\Sigma)\diff Q_{n_k}(\Sigma) \\
        &\le \liminf_{k\to\infty}\int_{\CovSpace}\left(c_{\varepsilon}\BW^2(M,M_{n_k})+(1+\varepsilon)\BW^2(M_{n_k},\Sigma)\right)\diff Q_{n_k}(\Sigma) \\
        &= \liminf_{k\to\infty}\left(c_{\varepsilon}\BW^2(M,M_{n_k})+(1+\varepsilon)\int_{\CovSpace}\BW^2(M_{n_k},\Sigma)\diff Q_{n_k}(\Sigma)\right) \\
        &= (1+\varepsilon)\liminf_{k\to\infty}\int_{\CovSpace}\BW^2(M_{n_k},\Sigma)\diff Q_{n_k}(\Sigma) \\
        &\le (1+\varepsilon)\lim_{k\to\infty}\int_{\CovSpace}\BW^2(M',\Sigma)\diff Q_{n_k}(\Sigma) \\
        &=(1+\varepsilon)\int_{\CovSpace}\BW^2(M',\Sigma)\diff Q(\Sigma).
    \end{align*}
    By taking the limit as $\varepsilon \to 0$, we have shown that $M\in F(Q)$, as needed.
\end{proof}

Let us make a few remarks about this result.
First, we note that it implies that $F:(\Pcal_2(\CovSpacePos),W_2)\to (\CovSpacePos, \BW)$ is continuous, since in this case we have that $\{F(P_n)\}_{n\in\Nbb}$ and $F(P)$ are singletons.
Second, we emphasize that this does not follow from the celebrated result of Le Gouic-Loubes \cite{le2017existence}, since we allow $\dim(\Hilbert) = \infty$ while their result requires the underlying metric space to be (complete, geodesic, and) locally compact.
Third, we note that this result allows us to recover the SLLN from \cite{PanaretosSantoroBW}, by taking $P_n$ to be the empirical measure of independent, identically-distributed covariances from a population distribution $P$.

\subsection{Assumptions}\label{subsec:assump}

In this subsection we describe the assumptions on the population distribution $P\in\Pcal(\CovSpace)$ that will be needed for our main results, and we provide some comparison with analogous assumptions in the literature.

First of all, we need the following:
\begin{itemize}
    \item[\namedlabel{int}{\textbf{(E)}}]          For some $\lambda > 0$ we have  $\int_{\CovSpace}\exp(\lambda \BW^2(\Sigma,0))\,\textnormal{d}P(\Sigma) < \infty$.
\end{itemize}

Condition ~\textnormal{\hyperref[int]{\textbf{(E)}}} simply states that, if $\Sigma$ is distributed according to $P$, then the real-valued random variable $\BW(\Sigma,0)$ has a sub-Gaussian distribution, which is the same assumption used in the work
\cite[Assumption~2]{Kroshnin} which focuses only on the finite-dimensional setting.
It is easily seen to be stronger than first- and second-moment conditions, but weaker than boundedness conditions like those assumed in \cite[Theorem~1]{BW_GD}.
\setlength{\leftmargini}{35pt}

\begin{remark}
It will be instructive, throughout the paper, to be able to compare assumption~\textnormal{\hyperref[int]{\textbf{(E)}}} with two other conditions that may be more easily understood:
Indeed, for $p\ge 1$, let us write
\begin{itemize}
     \item[\namedlabel{E_p}{\textbf{(E$_p$)}}] For all $\lambda > 0$ we have $\int_{\CovSpace}\exp(\lambda \BW^p(\Sigma,0))\,\textnormal{d}P(\Sigma) < \infty$ .
\end{itemize}
Then, it is easy to see that assumption~\textnormal{\hyperref[int]{\textbf{(E)}}} lies strictly in between \textnormal{\hyperref[E_p]{\textbf{(E$_1$)}}} and \textnormal{\hyperref[E_p]{\textbf{(E$_2$)}}}.
While we conjecture that our main results are still true under condition \textnormal{\hyperref[E_p]{\textbf{(E$_1$)}}} alone, we do not pursue this in the present work; for all practical applications one needs some form of sub-Gaussianity in order to get quantitative control on the large deviations events, so we believe we lose no significant generality by assuming \textnormal{\hyperref[int]{\textbf{(E)}}}.
Nonetheless, we will make some comparisons to \textnormal{\hyperref[E_p]{\textbf{(E$_1$)}}} and \textnormal{\hyperref[E_p]{\textbf{(E$_2$)}}} throughout the paper.
\end{remark}

Second and last of all, we need the following:

\begin{itemize}
        \item[\namedlabel{reg}{\textbf{(R)}}]         There exist measurable $c,C:\CovSpace\to (0,\infty)$ and a fixed covariance $R\in \CovSpacePos$ such that we have $P\{\Sigma: c(\Sigma)R \preceq \Sigma \preceq C(\Sigma)R\} = 1$ and $\int_{\CovSpace}C(\Sigma)\diff P(\Sigma) < \infty$.
\end{itemize}

The assumption~\textnormal{\hyperref[reg]{\textbf{(R)}}} states that $\Sigma$ is almost surely equivalent to $R$, in the sense that they generate the same reproducing kernel Hilbert space (RKHS), and (roughly speaking) that the ratio of their corresponding eigenvalues is integrable.
This is primarily an assumption on the regularity of a sample $\Sigma$ from $P$, although it also contains some small assumption about integrability.
We note that this assumption appears in \cite{PanaretosSantoroBW} to obtain a rate of convergence and a CLT; in fact, it is known \cite{PanaretosSantoroBW} that, in the finite-dimenensional case, it is equivalent to
\begin{equation}
        \int_{\CovSpace}\BW^2(\Sigma,0)\diff P(\Sigma) <\infty \qquad\textnormal{ and }\qquad P\{\Sigma: \Sigma\succ 0\} = 1,
    \end{equation}
    and is hence similar to the one required in \cite[Assumption~1]{Kroshnin}.

\iffalse

\begin{lemma}
    If $\dim(\Hilbert) < \infty$, then assumption~\textnormal{\hyperref[reg]{\textbf{(R)}}} holds if and only if
    \begin{equation}\label{eqn:E-R-equiv}
        \int_{\CovSpace}\BW^2(\Sigma,0)\diff P(\Sigma) <\infty \qquad\textnormal{ and }\qquad P\{\Sigma: \Sigma\succ 0\} = 1.
    \end{equation}
\end{lemma}

\begin{proof}
    First, suppose that assumption~\textnormal{\hyperref[reg]{\textbf{(R)}}} holds, and let us show \eqref{eqn:E-R-equiv} by noting
    \begin{equation*}
        \int_{\CovSpace}\BW^2(\Sigma,0)\diff P(\Sigma) = \int_{\CovSpace}\trace(\Sigma)\diff P(\Sigma) \le \int_{\CovSpace}\trace(C(\Sigma)R)\diff P(\Sigma) = \int_{\CovSpace}C(\Sigma)\diff P(\Sigma)\cdot\trace(R) < \infty
    \end{equation*}
    and also $P\{\Sigma: \Sigma\succ 0\} \ge P\{\Sigma: c(\Sigma) > 0\} = 1$.
    Second, let us suppose that \eqref{eqn:E-R-equiv} holds, and let us show \textnormal{\hyperref[reg]{\textbf{(R)}}}.
    Since $\dim(\Hilbert)<\infty$, we can set $R:=I$ and set $c(\Sigma)$ and $C(\Sigma)$ to be the smallest and largest eigenvalues of $\Sigma$, respectively.
    We of course have $P\{\Sigma: c(\Sigma)I \preceq \Sigma \preceq C(\Sigma)I\} = 1$ by construction, and we can compute:
    \begin{equation*}
        \int_{\CovSpace}C(\Sigma)\diff P(\Sigma) \le \int_{\CovSpace}\trace(\Sigma)\diff P(\Sigma) =\int_{\CovSpace}\BW^2(\Sigma,0)\diff P(\Sigma),
    \end{equation*}
    and the right side is finite by assumption.
\end{proof}

\fi

Since assumption~\textnormal{\hyperref[reg]{\textbf{(R)}}} always implies $P\{\Sigma:\Sigma\succ 0\} = 1$, we saw in Subsection~\ref{subsec:prep} that $P$ must have a unique barycenter.
Furthermore, it is known \cite{PanaretosSantoroBW} that, in the presence of  assumption~\textnormal{\hyperref[reg]{\textbf{(R)}}}, the unique Bures-Wasserstein barycenter is \textit{characterized} by a fixed-point equation. Indeed, if $P\in\Pcal_2(\CovSpace)$ satisfies assumption~\textnormal{\hyperref[reg]{\textbf{(R)}}}, then $P$ has a unique barycenter, and $M\in\CovSpace$ is the barycenter of $P$ if and only if
\begin{equation}\label{eqn:pop-fixed-pt}
	M\succ 0 \qquad \text{and}\qquad \int_{\CovSpace}(t_{M}^{\Sigma}-I)\diff P(\Sigma) = 0,
\end{equation}
where the integral is a Bochner integral in the Hilbert space $\SymSpace_M$.

To gain some intuition for the fixed-point equation \eqref{eqn:pop-fixed-pt}, note that it essentially states that a covariance $M$ is the barycenter of $P$ if and only if it is, on average, preserved by the random displacement $t_{M}^{\Sigma}$.
It also leads us to a geometric interpretation in terms of the Bures-Wasserstein space: a covariance $M$ is the barycenter of $P$ if and only if the push-forward of $P$ by the embedding $\ell_M$ defined in \eqref{eqn:log} is centered in $\SymSpace_M$.
See Figure~\ref{fig:fixed-pt-equation} for an illustration.

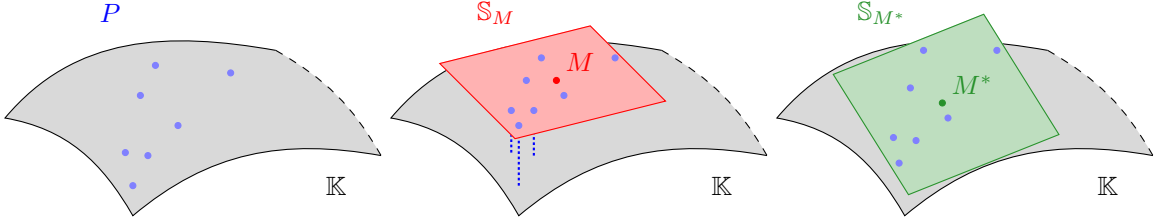
\begin{figure}[t]
	\begin{center}
		\begin{tikzpicture}[scale=2.0]
			
			% the Bures-Wasserstein manifold
			\coordinate (A) at (-0.2, 0.7);
			\coordinate (B) at (-1.85, 0.3);
			\coordinate (C) at (-2.7, 0.95);
			\coordinate (D) at (-0.9, 1.4);
			\fill[thin, opacity=0.75, color=gray!30] (A) to[out=170,in=40] (B) to[out=120,in=350] (C) to[out=50,in=170] (D) to[out=330,in=120] (A);
			\draw[thin, solid] (A) to[out=170,in=40] (B) to[out=120,in=350] (C) to[out=50,in=170] (D);
			\draw[thin, dashed] (D) to[out=330, in=120] (A);
			\node at (-0.5, 0.5) {$\mathbb{K}$};
			
			% the samples
			\coordinate (D1) at (-1.8, 1.1);
			\coordinate (D2) at (-1.2, 1.25);
			\coordinate (D3) at (-1.85, 0.5);
			\coordinate (D4) at (-1.7, 1.3);
			\coordinate (D5) at (-1.75, 0.7);
			\coordinate (D6) at (-1.9, 0.72);
			\coordinate (D7) at (-1.55, 0.9);
			
			\filldraw[blue!50] (D1) circle (0.02) node[] {};
			\filldraw[blue!50] (D2) circle (0.02) node[] {};
			\filldraw[blue!50] (D3) circle (0.02) node[] {};
			\filldraw[blue!50] (D4) circle (0.02) node[] {};
			\filldraw[blue!50] (D5) circle (0.02) node[] {};
			\filldraw[blue!50] (D6) circle (0.02) node[] {};
			\filldraw[blue!50] (D7) circle (0.02) node[] {};
			\node[color=blue] at (-2.0, 1.65) {$P$};
		\end{tikzpicture} \begin{tikzpicture}[scale=2.0]
			
			% the Bures-Wasserstein manifold
			\coordinate (A) at (-0.2, 0.7);
			\coordinate (B) at (-1.85, 0.3);
			\coordinate (C) at (-2.7, 0.95);
			\coordinate (D) at (-0.9, 1.4);
			\fill[thin, opacity=0.75, color=gray!30] (A) to[out=170,in=40] (B) to[out=120,in=350] (C) to[out=50,in=170] (D) to[out=330,in=120] (A);
			\draw[thin, solid] (A) to[out=170,in=40] (B) to[out=120,in=350] (C) to[out=50,in=170] (D);
			\draw[thin, dashed] (D) to[out=330, in=120] (A);
			\node at (-0.5, 0.5) {$\mathbb{K}$};
			
			% the samples and their projections
			\coordinate (D1) at (-1.8, 1.1);
			\coordinate (D2) at (-1.2, 1.25);
			\coordinate (D3) at (-1.85, 0.5);
			\coordinate (D4) at (-1.7, 1.3);
			\coordinate (D5) at (-1.75, 0.7);
			\coordinate (D6) at (-1.9, 0.72);
			\coordinate (D7) at (-1.55, 0.9);
			
			%\filldraw[blue] (D1) circle (0.02) node[] {};
			%\filldraw[blue] (D2) circle (0.02) node[] {};
			%\filldraw[blue] (D3) circle (0.02) node[] {};
			%\filldraw[blue] (D4) circle (0.02) node[] {};
			%\filldraw[blue] (D5) circle (0.02) node[] {};
			%\filldraw[blue] (D6) circle (0.02) node[] {};
			%\filldraw[blue] (D7) circle (0.02) node[] {};
			
			\draw[thick, dash pattern=on 1pt off 1pt, color=blue] (D1) to (-1.8,1.2);
			\draw[thick, dash pattern=on 1pt off 1pt, color=blue] (D2) to (-1.2,1.35);
			\draw[thick, dash pattern=on 1pt off 1pt, color=blue] (D3) to (-1.85,0.9);
			\draw[thick, dash pattern=on 1pt off 1pt, color=blue] (D4) to (-1.7,1.35);
			\draw[thick, dash pattern=on 1pt off 1pt, color=blue] (D5) to (-1.75,1.0);
			\draw[thick, dash pattern=on 1pt off 1pt, color=blue] (D6) to (-1.9,1.0);
			\draw[thick, dash pattern=on 1pt off 1pt, color=blue] (D7) to (-1.55,1.1);

			% a new point and its tangent space
			\filldraw[thin, opacity=0.75, color=red!30, xscale=-0.5, yscale=0.25,shift={(1.75,4.25)}] (0, 0) to (2, -1) to (3,1) to (1, 2) to cycle;
			\draw[thin, color=red, xscale=-0.5, yscale=0.25,shift={(1.75,4.25)}] (0, 0) to (2, -1) to (3,1) to (1, 2) to cycle;
			\filldraw[red] (-1.6, 1.2) circle (0.02) node[above right] {$M$};
			\node[color=red] at (-2.0, 1.65) {$\SymSpace_{M}$};
			
			% projections of the data onto the tangent space
			\filldraw[blue!50] (-1.8,1.2) circle (0.02) node[] {};
			\filldraw[blue!50] (-1.21,1.35) circle (0.02) node[] {};
			\filldraw[blue!50] (-1.85,0.9) circle (0.02) node[] {};
			\filldraw[blue!50] (-1.7,1.35) circle (0.02) node[] {};
			\filldraw[blue!50] (-1.75,1.0) circle (0.02) node[] {};
			\filldraw[blue!50] (-1.9,1.0) circle (0.02) node[] {};
			\filldraw[blue!50] (-1.55,1.1) circle (0.02) node[] {};
		\end{tikzpicture} \begin{tikzpicture}[scale=2.0]
			
			% the Bures-Wasserstein manifold
			\coordinate (A) at (-0.2, 0.7);
			\coordinate (B) at (-1.85, 0.3);
			\coordinate (C) at (-2.7, 0.95);
			\coordinate (D) at (-0.9, 1.4);
			\fill[thin, opacity=0.75, color=gray!30] (A) to[out=170,in=40] (B) to[out=120,in=350] (C) to[out=50,in=170] (D) to[out=330,in=120] (A);
			\draw[thin, solid] (A) to[out=170,in=40] (B) to[out=120,in=350] (C) to[out=50,in=170] (D);
			\draw[thin, dashed] (D) to[out=330, in=120] (A);
			\node at (-0.5, 0.5) {$\mathbb{K}$};
			
			% the samples and their projections
			\coordinate (D1) at (-1.8, 1.1);
			\coordinate (D2) at (-1.2, 1.25);
			\coordinate (D3) at (-1.85, 0.5);
			\coordinate (D4) at (-1.7, 1.3);
			\coordinate (D5) at (-1.75, 0.7);
			\coordinate (D6) at (-1.9, 0.72);
			\coordinate (D7) at (-1.55, 0.9);
			
			%\filldraw[blue] (D1) circle (0.02) node[] {};
			%\filldraw[blue] (D2) circle (0.02) node[] {};
			%\filldraw[blue] (D3) circle (0.02) node[] {};
			%\filldraw[blue] (D4) circle (0.02) node[] {};
			%\filldraw[blue] (D5) circle (0.02) node[] {};
			%\filldraw[blue] (D6) circle (0.02) node[] {};
			%\filldraw[blue] (D7) circle (0.02) node[] {};
			
			\draw[thick, dash pattern=on 1pt off 1pt, color=blue] (D1) to ($(D1) + (-0.0125,0.05)$);
			\draw[thick, dash pattern=on 1pt off 1pt, color=blue] (D2) to ($(D2) + (-0.0375,0.15)$);
			\draw[thick, dash pattern=on 1pt off 1pt, color=blue] (D3) to ($(D3) + (-0.0375,0.15)$);
			\draw[thick, dash pattern=on 1pt off 1pt, color=blue] (D4) to ($(D4) + (-0.025,0.1)$);
			\draw[thick, dash pattern=on 1pt off 1pt, color=blue] (D5) to ($(D5) + (-0.025,0.1)$);
			\draw[thick, dash pattern=on 1pt off 1pt, color=blue] (D6) to ($(D6) + (-0.025,0.1)$);
			\draw[thick, dash pattern=on 1pt off 1pt, color=blue] (D7) to ($(D7) + (-0.0125,0.05)$);
			
			% a new point and its tangent space
			\definecolor{Green}{HTML}{27912b}
			\filldraw[thin, opacity=0.75, color=Green!30, xscale=-0.5, yscale=0.4,shift={(1.65,2.1)}] (0, 0) to (2, -1) to (3,1) to (1, 2) to cycle;
			\draw[thin, color=Green, xscale=-0.5, yscale=0.4,shift={(1.65,2.1)}] (0, 0) to (2, -1) to (3,1) to (1, 2) to cycle;
			\filldraw[Green] (-1.6, 1.05) circle (0.02) node[above right] {$\bary$};
			\node[color=Green] at (-2.0, 1.65) {$\SymSpace_{\bary}$};
                %\node[color=red] at (-2.0, 1.65) {$\SymSpace_{M}$};
			
			% projections of the data onto the tangent space
			\filldraw[blue!50] ($(D1) + (-0.0125,0.05)$) circle (0.02) node[] {};
			\filldraw[blue!50] ($(D2) + (-0.0375,0.15)$) circle (0.02) node[] {};
			\filldraw[blue!50] ($(D3) + (-0.0375,0.15)$) circle (0.02) node[] {};
			\filldraw[blue!50] ($(D4) + (-0.025,0.1)$) circle (0.02) node[] {};
			\filldraw[blue!50] ($(D5) + (-0.025,0.1)$) circle (0.02) node[] {};
			\filldraw[blue!50] ($(D6) + (-0.025,0.1)$) circle (0.02) node[] {};
			\filldraw[blue!50] ($(D7) + (-0.0125,0.05)$) circle (0.02) node[] {};
		\end{tikzpicture}
	\end{center}
	\caption{The fixed-point equation for the Bures-Wasserstein barycenter.
    For a probability measure $P$ on $(\CovSpace,\BW)$ (left) and for each covariance $M\succ 0$, we can push forward $P$ by $\ell_M$ to a probability measure on $(\SymSpace_M,\langle\,\cdot\,,\,\cdot\,\rangle_M)$.
    This pushforward is centered if and only if $M$ is a Bures-Wasserstein barycenter of $P$ (middle, right).}
    \label{fig:fixed-pt-equation}
\end{figure}

\medskip

%\paragraph{\textsc{Bures-Wasserstein barycenter problem}.}
For the remainder of this section we will assume that $P\in\Pcal(\CovSpace)$ is fixed and satisfies \textnormal{\hyperref[int]{\textbf{(E)}}} and \textnormal{\hyperref[reg]{\textbf{(R)}}}, and that $(\Omega,\Fcal,\Pbb)$ is a probability space on which is defined an IID sequence $\Sigma_1,\Sigma_2,\ldots$ of $\CovSpace$-valued random variables with common distribution $P$.
These conditions together imply that the optimization problem \eqref{eqn:pop-bary} has a unique solution and that the random optimization problem \eqref{eqn:emp-bary} almost surely has a unique solution; we denote the corresponding solutions by $\bary$ and $\bary_n$, and refer to them as population and empirical (Bures-Wasserstein) barycenters, respectively.

\subsection{Exponential Tilting}\label{subsec:exp-tilt}

In this subsection we develop a canonical notion of exponential tilting for probability measures on $(\CovSpace,\BW)$.
The key is the following fundamental duality result, which shows that relative entropy projections under barycenter constraints are solved by certain exponential tilts.

\begin{proposition}\label{prop:tilting}
    For any $M\succ 0$ the optimization problems
          \begin{equation}\label{eqn:IP}
            \begin{cases}
                \textnormal{maximize} & \trace(AM) - \log\int_{\CovSpace}\exp \trace\left(AMt_{M}^{\Sigma}\right)\diff P(\Sigma) \\
                \textnormal{over} & A\in\SymSpace_M
            \end{cases}
        \end{equation}
	and
 \begin{equation}\label{eqn:JP}
		\begin{cases}
			\textnormal{minimize} &H(Q\, | \, P) \\
			\textnormal{over } & Q\in \Pcal_2(\CovSpacePos) \\
			\textnormal{with} & F(Q) = M.
		\end{cases}
    \end{equation}
	have the same value and admit at most one optimizer. Furthermore, feasible points $A\in \SymSpace_M$ and $Q\in\Pcal_2(\CovSpacePos)$ are optimal (for \eqref{eqn:IP} and \eqref{eqn:JP}, respectively) if and only if they satisfy $Q=P^{M\to A}$, where 
 % $P^{M\to A}$ is defined via
         \begin{equation}\label{eqn:exp-tilt-def}
        \frac{\diff P^{M\to A}}{\diff P}(\Sigma) \propto \exp \trace(AMt_{M}^{\Sigma}).
        \end{equation}    
\end{proposition}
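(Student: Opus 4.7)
The strategy is a Cr\'amer-style duality between log--moment generating function and relative entropy. The key enabler is the fixed-point characterization~\eqref{eqn:pop-fixed-pt}: for $Q$ concentrated on $\CovSpacePos$ and satisfying mild regularity, the nonlinear constraint $F(Q)=M$ is equivalent to the \emph{linear} constraint $\int_{\CovSpace}(t_M^\Sigma-\Id)\,\diff Q(\Sigma)=0$, interpreted as a Bochner integral in the Hilbert space $\SymSpace_M$. Once linearized, \eqref{eqn:JP} becomes a standard relative entropy projection subject to a Hilbert-valued linear constraint, and \eqref{eqn:IP} is its natural Lagrangian dual.

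The central tool I would invoke is the Gibbs (Donsker--Varadhan) variational principle: for any $A\in\SymSpace_M$ with $\int_{\CovSpace}\exp\trace(AMt_M^\Sigma)\,\diff P<\infty$ and any $Q\ll P$,
\begin{equation*}
H(Q\,|\,P)\ \ge\ \int_{\CovSpace}\trace(AM(t_M^\Sigma-\Id))\,\diff Q(\Sigma)\ -\ \log\int_{\CovSpace}\exp\trace(AM(t_M^\Sigma-\Id))\,\diff P(\Sigma),
\end{equation*}
with equality if and only if $Q=P^{M\to A}$. When $Q$ is feasible for~\eqref{eqn:JP}, the first integral on the right vanishes by~\eqref{eqn:pop-fixed-pt}, and expanding $\trace(AM(t_M^\Sigma-\Id))=\trace(AMt_M^\Sigma)-\trace(AM)$ produces exactly the objective in~\eqref{eqn:IP}, yielding weak duality $\eqref{eqn:JP}\ge\eqref{eqn:IP}$. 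For strong duality I would produce a dual maximizer $A^*$ and note that the G\^ateaux derivative of the concave dual objective vanishing at $A^*$ reads $\int_{\CovSpace}(t_M^\Sigma-\Id)\,\diff P^{M\to A^*}=0$ in $\SymSpace_M$. Applying~\eqref{eqn:pop-fixed-pt} once more certifies that $P^{M\to A^*}$ is feasible, and the equality case of the Donsker--Varadhan inequality closes the duality gap and identifies $P^{M\to A^*}$ as the unique primal optimizer. Uniqueness for~\eqref{eqn:JP} is immediate from strict convexity of $H(\cdot\,|\,P)$ over the convex linearized feasibility set; uniqueness for~\eqref{eqn:IP} follows because any two dual optima yield the same tilted measure, and strict concavity of the dual along the directions in which the law of $t_M^\Sigma$ is non-degenerate forces them to coincide.

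The principal technical obstacle is that $t_M^\Sigma$ may be unbounded and $\SymSpace_M$ is infinite-dimensional, so $A\mapsto\log\int\exp\trace(AMt_M^\Sigma)\,\diff P$ is \emph{a priori} neither finite nor differentiable on all of $\SymSpace_M$. This is controlled via assumption~\hyperref[int]{\textbf{(E)}} combined with Lemma~\ref{lemma:bounds}: the estimate $|\trace(AMt_M^\Sigma)|\le \|A\|_2\,\BW(M,0)\,\BW(\Sigma,0)$ together with the sub-Gaussian tail of $\BW(\Sigma,0)$ under $P$ makes the dual objective finite, concave, and attaining its supremum on a neighborhood of the origin of $\SymSpace_M$, with differentiation under the integral fully justified. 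A secondary subtlety is confirming that $P^{M\to A^*}$ has enough regularity to invoke~\eqref{eqn:pop-fixed-pt}: since its Radon--Nikodym derivative with respect to $P$ is pointwise bounded by a function of $\BW(\Sigma,0)$ alone, the assumption~\hyperref[reg]{\textbf{(R)}} transfers directly from $P$ to $P^{M\to A^*}$, and membership in $\Pcal_2(\CovSpacePos)$ follows as well.
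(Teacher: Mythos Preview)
Your proposal is essentially correct and follows the same approach as the paper: both linearize the barycenter constraint via the fixed-point characterization~\eqref{eqn:pop-fixed-pt}, invoke the Donsker--Varadhan/Gibbs duality between relative entropy and the log-moment generating function in the Hilbert space $(\SymSpace_M,\langle\cdot,\cdot\rangle_M)$, and then differentiate the dual objective (justified through Lemma~\ref{lemma:bounds} and assumption~\hyperref[int]{\textbf{(E)}}) to obtain the fixed-point equation for $P^{M\to A}$. The paper makes the pushforward by $\ell_M$ explicit and cites the Hilbert-space entropy-projection duality as a black box, whereas you argue directly from the Donsker--Varadhan inequality; this is a cosmetic difference, and your version has the minor advantage of being self-contained. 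One small caution: your phrase ``produce a dual maximizer $A^*$'' presumes existence, which is not asserted in the proposition (it only claims \emph{at most one} optimizer and equality of values), so to match the statement precisely you should either note that the equality-of-values holds even without attainment (via a maximizing-sequence argument) or, as the paper does, defer to the cited Hilbert-space result which covers this case.
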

It will be useful in what follows to give a name to the solution operators for \eqref{eqn:IP} and \eqref{eqn:JP}.
To do this, we first write
\begin{equation}\label{eqn:effdom}
    \effdom := \{M\in\CovSpacePos: \textnormal{\eqref{eqn:IP} and \eqref{eqn:JP} have solutions}\}
\end{equation}
and then we define $P^{M}$ and $A_M$, for $M\in\effdom$, to be the uniquely-defined solutions to \eqref{eqn:IP} and \eqref{eqn:JP}, respectively -- so that clearly  $P^M = P^{M\to A_M}$ for all $M\in\effdom$.

% Observe by Proposition~\ref{prop:tilting} that we have $P^M = P^{M\to A_M}$ for all $M\in\effdom$.

% The results then states that $P^{M\to A}$ has the smallest relative entropy from $P$ among all probability measures with barycenter $M$.

\begin{proof}[Proof of Proposition~\ref{prop:tilting}]
    Consider the injective embedding $\ell_{M}:\CovSpacePos\to \SymSpace_M$  defined in \eqref{eqn:log}.
    We can reparameterize \eqref{eqn:JP} in the following ways:
    first, we use the fixed-point equation \eqref{eqn:pop-fixed-pt} to see that, for every $Q\in \Pcal_2(\CovSpacePos)$, the constraint $F(Q) = M$ is equivalent to $\int_{\SymSpace_M}A\diff (({\ell_M})_{\#}Q)(A) = 0$; second we use Lemma~\ref{lemma:bounds} to see that the integrability constraint $({\ell_M})_{\#}Q\in \Pcal_2(\SymSpace_M)$ is equivalent to $Q\in\Pcal_2(\CovSpacePos)$;
	third, we have $H(({\ell_M})_{\#}Q\, | \, ({\ell_M})_{\#}P)=H(Q\, | \, P)$ for all $P,Q\in\Pcal(\CovSpacePos)$, again using the fact that $\ell_M$ is an injection.
	Thus, we can change variables to $\tilde{Q}:=({\ell_M})_{\#}Q\in \Pcal_2(\SymSpace_M)$, and we have shown that the value of \eqref{eqn:JP} is equal to the value of
	\begin{equation}\label{eqn:thm-LDP-3}
		\begin{cases}
			\textnormal{minimize} &H\left(\tilde{Q}\, \Big| \, ({\ell_M})_{\#}P\right) \\
			\textnormal{over } & \tilde{Q}\in \Pcal_2(\SymSpace_M) \\
			\textnormal{with} & \int_{\SymSpace_M}A\diff \tilde{Q}(A) = 0.
		\end{cases}
	\end{equation}
        Now observe that \eqref{eqn:thm-LDP-3} is a standard optimization problem: over all probability measures on a Hilbert space, minimize the relative entropy from a fixed probability measure subject to a constraint on the expectation.
        It is well-known that this problem is dual to the problem
        \begin{equation}\label{eqn:Hilbert-mean-dual}
		\begin{cases}
			\textnormal{maximize} &\langle A,0\rangle_{M} - \log \int_{\SymSpace_M}\exp\langle A,B\rangle_{M}\diff ((\ell_M)_{\#}P)(B) \\
			\textnormal{over } & A\in\SymSpace_M
		\end{cases}
	\end{equation}
        in the sense that both problems have the same value, both have at most one optimizer, and \eqref{eqn:thm-LDP-3} admits a minimizer if and only if \eqref{eqn:Hilbert-mean-dual} admits a maximizer.
        (See \cite[Problem~12.2]{CoverThomas} for a proof in the discrete setting; in the Hilbert space setting, one can prove this easily from the Donsker-Varadhan variational formula for relative entropy \cite[Lemma~6.2.13]{dembo2009large}.)
        Moreover, we can compute:
        \begin{align*}
            \langle A,0\rangle_{M} - \log \int_{\SymSpace_M}&\exp\langle A,B\rangle_{M}\diff ((\ell_M)_{\#}P)(B) \\
            &= - \log \int_{\SymSpace_M}\exp\langle A,B\rangle_{M}\diff ((\ell_M)_{\#}P)(B) \\
            &= - \log \int_{\CovSpace}\exp\langle A,t_{M}^{\Sigma}-\Id\rangle_{M}\diff P(\Sigma) \\
            &= \trace(AM) - \log \int_{\CovSpace}\exp \trace(AMt_M^\Sigma)\diff P(\Sigma).
        \end{align*}
        Therefore, the value of \eqref{eqn:Hilbert-mean-dual} is equal to the value $I_P(M)$ of \eqref{eqn:IP}.
        This proves the first two statements.

    It remains to prove the characterization of optimality provided in the last statement.
    First, suppose that feasible points $A\in\SymSpace_M$ and $Q\in\Pcal_2(\CovSpacePos)$ satisfy \eqref{eqn:exp-tilt-def}.
    Then, writing
    \begin{equation*}
        Z:= \log\int_{\CovSpace}\exp \trace\left(AMt_{M}^{\Sigma}\right)\diff P(\Sigma)
    \end{equation*}
    for the normalizing constant implicit in \eqref{eqn:exp-tilt-def}, we see that $Q$ is defined via
    \begin{equation}
        \frac{\diff Q}{\diff P}(\Sigma) = \exp \trace((AMt_{M}^{\Sigma}) - Z),
    \end{equation}
    hence we can compute
    \begin{equation}\label{eqn:rel-int-calc}
        \begin{split}
            H(Q|P) &= \int_{\CovSpace} \log \left(\frac{\diff Q}{\diff P}(\Sigma)\right)\diff Q(\Sigma)) \\
            &= \int_{\CovSpace} (\trace(AMt_{M}^{\Sigma})-Z)\diff Q(\Sigma) \\
            &= \int_{\CovSpace} \trace(AMt_{M}^{\Sigma})\diff Q(\Sigma) - Z \\
            &= \trace\left(AM\int _{\CovSpace}t_{M}^{\Sigma}\diff Q(\Sigma)\right) - Z \\
            &= \trace\left(AM\int _{\CovSpace}t_{M}^{\Sigma}\diff Q(\Sigma)\right) - \log\int_{\CovSpace}\exp \trace\left(AMt_{M}^{\Sigma}\right)\diff P(\Sigma) \\
            &= \trace\left(AM\right) - \log\int_{\CovSpace}\exp \trace\left(AMt_{M}^{\Sigma}\right)\diff P(\Sigma).
        \end{split}
    \end{equation}
    This shows that the value $H(Q|P)$ is achievable by a feasible point $A\in\SymSpace_{M}$ for \eqref{eqn:IP}, hence $A$ and $Q$ are optimal for their respective problems.
    
    For the opposite direction, suppose that $A\in\SymSpace_M$ and $Q\in\Pcal_2(\CovSpacePos)$ are optimal.
    Then, we characterize $Q$ by developing first-order optimality conditions for \eqref{eqn:IP}.
    Of course, for fixed $M,\Sigma\in\CovSpacePos$, the function
    \begin{equation*}
        f_{M,\Sigma}(A):=\exp \trace\left(AM(t_{M}^{\Sigma}-\Id)\right)
    \end{equation*}
    is Fr\'echet differentiable in $A\in\SymSpace_M$, and its \Frechet derivative is given by
    \begin{equation*}
        \nabla_A f_{M,\Sigma}(A)(H) = \trace(HM(t_{M}^{\Sigma}-\Id))f_{M,\Sigma}(A),
    \end{equation*}
    for $H\in\SymSpace_M$.
    Towards differentiating under the integral, note that we have
    \begin{align*}
        &\left|\frac{f_{M,\Sigma}(A+\delta H)-f_{M,\Sigma}(A)}{\delta}-\trace(HM(t_{M}^{\Sigma}-\Id))f(A,\Sigma,M)\right| \\
        &=\exp\trace\left(AM(t_{M}^{\Sigma}-\Id)\right)\left|\frac{\exp(\delta\trace(HM(t_{M}^{\Sigma}-\Id)))-1-\delta\trace(HM(t_{M}^{\Sigma}-\Id))}{\delta}\right| \\
        &\le\exp\trace\left(AM(t_{M}^{\Sigma}-\Id)\right)\frac{\delta}{2}|\trace(HM(t_{M}^{\Sigma}-\Id))|^2\exp(\delta |\trace(HM(t_{M}^{\Sigma}-\Id))|),
    \end{align*}
    using the (usual) Taylor series remainder bound $|e^{t\delta}-1-t\delta|\le \frac{1}{2}t^2\delta^2\exp(|t|\delta)$ for all $t,\delta\in\Rbb$.
    Now for $0 < \delta < 1$ we use the generous bound $t^2 \le e^{2|t|}$ along with Lemma~\ref{lemma:bounds} to further this as
    \begin{align*}
        \sup_{0<\delta<1}&\left|\frac{f_M(A+\delta H)-f_{M,\Sigma}(A)}{\delta}-\trace(HM(t_{M}^{\Sigma}-\Id))f_{M,\Sigma}(A)\right| \\
        &\le\frac{1}{2}\exp\left(|\trace\left(AM(t_{M}^{\Sigma}-\Id)\right)|+3| \trace(HM(t_{M}^{\Sigma}-\Id))|\right) \\
        &\le\frac{1}{2}\exp\left((\|A\|_2+3\|H\|_2)\BW(M,0)\BW(M,\Sigma)\right).
    \end{align*}
    By the integrability condition \textnormal{\hyperref[int]{\textbf{(E)}}}, the right side is integrable with respect to $P$ when $\Sigma$ ranges over $\CovSpacePos$.
    Therefore, we can apply dominated convergence to differentiate under the integral, yielding:
    \begin{align*}
        \left(\nabla_{A}\log\int_{\CovSpace}\exp \trace\left(AM(t_{M}^{\Sigma}-\Id)\right)\diff P(\Sigma)\right)(H) &= \frac{\int_{\CovSpace}\trace(HM(t_{M}^{\Sigma}-\Id))\exp \trace\left(AM(t_{M}^{\Sigma}-\Id)\right)\diff P(\Sigma)}{\int_{\CovSpace}\exp \trace\left(AM(t_{M}^{\Sigma}-\Id)\right)\diff P(\Sigma)} \\
        &= \trace\left(HM\int_{\CovSpace}(t_{M}^{\Sigma}-\Id)\diff P^{M\to A}(\Sigma)\right) \\
        &= \left\langle H, \int_{\CovSpace}(t_{M}^{\Sigma}-\Id)\diff P^{M\to A}(\Sigma)\right\rangle_{M}
    \end{align*}
    for all $H\in\SymSpace_M$, where the integral is a Bochner integral in $\SymSpace_M$.
    Since $M$ is positive and $A$ is optimal, this implies
    \begin{equation}\label{eqn:fixed-pt}
        \int_{\CovSpace}(t_{M}^{\Sigma}-\Id)\diff P^{M\to A}(\Sigma) = 0.
    \end{equation}
    By \eqref{eqn:pop-fixed-pt}, this fixed-point equation implies that $M$ is the the Bures-Wasserstein barycenter of $P^{M\to A}$ (that is, $F(P^{M\to A}) = M$), so that $P^{M\to A}$ is feasible for \eqref{eqn:JP}. Further observe, as in \eqref{eqn:rel-int-calc}, that we have $H(P^{M\to A}\,|\,P) = I_P(A)$.
    Since \eqref{eqn:JP} admits at most one maximizer, we conclude that $Q = P^{M\to A}$.
\end{proof}

We now give some comments about this result.
First, from the point of view of optimization, we note that it identifies a duality between a non-convex optimization problem \eqref{eqn:JP} and a convex optimization problem \eqref{eqn:IP}.
Hence, the non-convex problem \eqref{eqn:JP} can in fact be solved in practice.
This is the starting point for some numerical methods related to large deviations theory, which we explore in Subsection~\ref{subsec:numerical}.

Second, we observe that for each $M\in \CovSpacePos$, the family of probability measures $\{P^{M\to A}\}_{A\in\SymSpace_M}$ is analogous to the usual Euclidean notion of an exponentially-tilted family in the following senses:
For one, the result states that, under a constraint that a probability measure has a fixed barycenter, the smallest possible relative entropy from $P$ is achieved within the family $\{P^{M\to A}\}_{A\in\SymSpace_M}$.
For another, it states that $A\in\SymSpace_M$ is optimal for \eqref{eqn:IP} if and only if $P^{M\to A}$ has barycenter $M$.
See Figure~\ref{fig:exp-tilt} for a geometric interpretation.

Finally, note that $I_P$ appearing in \eqref{eqn:IP} can, in some sense, be expressed as a Fenchel-Legendre transform.
Indeed, notice that $I_P(M) = (\Lambda_P^{M})^{\ast}(0)$, where $\Lambda_P^M:(\SymSpace_M,\langle\,\cdot\,,\,\cdot\,\rangle_{M})\to \Rbb$ is
\begin{equation*}
	\Lambda_P^{M}(A):=\log\int_{\CovSpace}\exp \langle A,t_{M}^{\Sigma}-\Id\rangle_{M}\diff P(\Sigma)
\end{equation*}
for $A\in\SymSpace_M$.
Of course, $\Lambda_P^{M}$ is nothing more than the cumulant generating function for the push-forward of $P$ by the embedding defined in \eqref{eqn:log}.
This further explains the geometric interpretation of exponential tilting shown in Figure~\ref{fig:exp-tilt}.

\begin{figure}
	\begin{center}
		\begin{tikzpicture}[scale=2.0]
			
			% the Bures-Wasserstein manifold
			\coordinate (A) at (-0.2, 0.7);
			\coordinate (B) at (-1.85, 0.3);
			\coordinate (C) at (-2.7, 0.95);
			\coordinate (D) at (-0.9, 1.4);
			\fill[thin, opacity=0.75, color=gray!30] (A) to[out=170,in=40] (B) to[out=120,in=350] (C) to[out=50,in=170] (D) to[out=330,in=120] (A);
			\draw[thin, solid] (A) to[out=170,in=40] (B) to[out=120,in=350] (C) to[out=50,in=170] (D);
			\draw[thin, dashed] (D) to[out=330, in=120] (A);
                \node at (-0.5, 0.5) {$\mathbb{K}$};
			
			% the samples
			\coordinate (D1) at (-1.8, 1.1);
			\coordinate (D2) at (-1.2, 1.25);
			\coordinate (D3) at (-1.85, 0.5);
			\coordinate (D4) at (-1.7, 1.3);
			\coordinate (D5) at (-1.75, 0.7);
			\coordinate (D6) at (-1.9, 0.72);
			\coordinate (D7) at (-1.55, 0.9);
			
			\filldraw[blue!50] (D1) circle (0.02) node[] {};
			\filldraw[blue!50] (D2) circle (0.02) node[] {};
			\filldraw[blue!50] (D3) circle (0.02) node[] {};
			\filldraw[blue!50] (D4) circle (0.02) node[] {};
			\filldraw[blue!50] (D5) circle (0.02) node[] {};
			\filldraw[blue!50] (D6) circle (0.02) node[] {};
			\filldraw[blue!50] (D7) circle (0.02) node[] {};
			\node[color=blue] at (-2.0, 1.65) {$P$};
			
			\filldraw[red] (-1.6, 1.2) circle (0.02) node[above right] {$M$};

		\end{tikzpicture} \begin{tikzpicture}
			\draw[dashed, -latex] (0,0) to (2,0);
			\node[] at (1,0.5) {};
			\node[] at (0,-1) {};
			\node[] at (0,-1) {};
		\end{tikzpicture} \begin{tikzpicture}[scale=2.0]
			
			% the Bures-Wasserstein manifold
			\coordinate (A) at (-0.2, 0.7);
			\coordinate (B) at (-1.85, 0.3);
			\coordinate (C) at (-2.7, 0.95);
			\coordinate (D) at (-0.9, 1.4);
			\fill[thin, opacity=0.75, color=gray!30] (A) to[out=170,in=40] (B) to[out=120,in=350] (C) to[out=50,in=170] (D) to[out=330,in=120] (A);
			\draw[thin, solid] (A) to[out=170,in=40] (B) to[out=120,in=350] (C) to[out=50,in=170] (D);
			\draw[thin, dashed] (D) to[out=330, in=120] (A);
			\node at (-0.5, 0.5) {$\mathbb{K}$};
			
			% the samples
			\coordinate (D1) at (-1.8, 1.1);
			\coordinate (D2) at (-1.2, 1.25);
			\coordinate (D3) at (-1.85, 0.5);
			\coordinate (D4) at (-1.7, 1.3);
			\coordinate (D5) at (-1.75, 0.7);
			\coordinate (D6) at (-1.9, 0.72);
			\coordinate (D7) at (-1.55, 0.9);
			
			\filldraw[blue!50, opacity=0.5] (D1) circle (0.02) node[] {};
			\filldraw[blue!75, opacity=0.8] (D2) circle (0.02) node[] {};
			\filldraw[blue!50, opacity=0.2] (D3) circle (0.02) node[] {};
			\filldraw[blue!60, opacity=0.6] (D4) circle (0.02) node[] {};
			\filldraw[blue!50, opacity=0.5] (D5) circle (0.02) node[] {};
			\filldraw[blue!50, opacity=0.5] (D6) circle (0.02) node[] {};
			\filldraw[blue!50, opacity=0.8] (D7) circle (0.02) node[] {};
			\node[color=blue] at (-2.0, 1.65) {};
			\filldraw[red] (-1.6, 1.2) circle (0.02) node[below right] {};
                \node[color=blue] at (-2.0, 1.65) {$P^{M}$};
		\end{tikzpicture} \\
		\begin{tikzpicture}
			\draw[-latex] (0,0) to (0,-2);
			\node[] at (-1, 0) {};
			\node[] at (0.75, -1.) {$\ell_M$};
			
			\draw[latex-] (7,0) to (7,-2);
			\node[] at (7.75, -1.) {$e_M$};
		\end{tikzpicture} \\
	\begin{tikzpicture}[scale=2.0]
			
			% the Bures-Wasserstein manifold
			\coordinate (A) at (-0.2, 0.7);
			\coordinate (B) at (-1.85, 0.3);
			\coordinate (C) at (-2.7, 0.95);
			\coordinate (D) at (-0.9, 1.4);
			\fill[thin, opacity=0.75, color=gray!30] (A) to[out=170,in=40] (B) to[out=120,in=350] (C) to[out=50,in=170] (D) to[out=330,in=120] (A);
			\draw[thin, solid] (A) to[out=170,in=40] (B) to[out=120,in=350] (C) to[out=50,in=170] (D);
			\draw[thin, dashed] (D) to[out=330, in=120] (A);
			\node at (-0.5, 0.5) {$\mathbb{K}$};
                \node[color=red] at (-2.0, 1.65) {$\SymSpace_{M}$};
			
			% the samples and their projections
			\coordinate (D1) at (-1.8, 1.1);
			\coordinate (D2) at (-1.2, 1.25);
			\coordinate (D3) at (-1.85, 0.5);
			\coordinate (D4) at (-1.7, 1.3);
			\coordinate (D5) at (-1.75, 0.7);
			\coordinate (D6) at (-1.9, 0.72);
			\coordinate (D7) at (-1.55, 0.9);
			
			%\filldraw[blue] (D1) circle (0.02) node[] {};
			%\filldraw[blue] (D2) circle (0.02) node[] {};
			%\filldraw[blue] (D3) circle (0.02) node[] {};
			%\filldraw[blue] (D4) circle (0.02) node[] {};
			%\filldraw[blue] (D5) circle (0.02) node[] {};
			%\filldraw[blue] (D6) circle (0.02) node[] {};
			%\filldraw[blue] (D7) circle (0.02) node[] {};
			
			%\draw[thick, dash pattern=on 1pt off 1pt, color=blue] (D1) to (-1.8,1.2);
			%\draw[thick, dash pattern=on 1pt off 1pt, color=blue] (D2) to (-1.2,1.35);
			%\draw[thick, dash pattern=on 1pt off 1pt, color=blue] (D3) to (-1.85,0.9);
			%\draw[thick, dash pattern=on 1pt off 1pt, color=blue] (D4) to (-1.7,1.35);
			%\draw[thick, dash pattern=on 1pt off 1pt, color=blue] (D5) to (-1.75,1.0);
			%\draw[thick, dash pattern=on 1pt off 1pt, color=blue] (D6) to (-1.9,1.0);
			%\draw[thick, dash pattern=on 1pt off 1pt, color=blue] (D7) to (-1.18,1.1);

			% a new point and its tangent space
			\filldraw[thin, opacity=0.75, color=red!30, xscale=-0.5, yscale=0.25,shift={(1.75,4.25)}] (0, 0) to (2, -1) to (3,1) to (1, 2) to cycle;
			\draw[thin, color=red, xscale=-0.5, yscale=0.25,shift={(1.75,4.25)}] (0, 0) to (2, -1) to (3,1) to (1, 2) to cycle;
			\filldraw[red] (-1.6, 1.2) circle (0.02) node[below right] {};
			%\node[color=red] at (-2.0, 1.65) {Tan$(M)$};
			
			% projections of the data onto the tangent space
			\filldraw[blue!50] (-1.8,1.2) circle (0.02) node[] {};
			\filldraw[blue!50] (-1.21,1.35) circle (0.02) node[] {};
			\filldraw[blue!50] (-1.85,0.9) circle (0.02) node[] {};
			\filldraw[blue!50] (-1.7,1.35) circle (0.02) node[] {};
			\filldraw[blue!50] (-1.75,1.0) circle (0.02) node[] {};
			\filldraw[blue!50] (-1.9,1.0) circle (0.02) node[] {};
			\filldraw[blue!50] (-1.55,1.1) circle (0.02) node[] {};
		\end{tikzpicture} \begin{tikzpicture}
			\draw[-latex] (0,0) to (2,0);
			\node[] at (0,-1) {};
		\end{tikzpicture} \begin{tikzpicture}[scale=2.0]
			
			% the Bures-Wasserstein manifold
			\coordinate (A) at (-0.2, 0.7);
			\coordinate (B) at (-1.85, 0.3);
			\coordinate (C) at (-2.7, 0.95);
			\coordinate (D) at (-0.9, 1.4);
			\fill[thin, opacity=0.75, color=gray!30] (A) to[out=170,in=40] (B) to[out=120,in=350] (C) to[out=50,in=170] (D) to[out=330,in=120] (A);
			\draw[thin, solid] (A) to[out=170,in=40] (B) to[out=120,in=350] (C) to[out=50,in=170] (D);
			\draw[thin, dashed] (D) to[out=330, in=120] (A);
			\node at (-0.5, 0.5) {$\mathbb{K}$};
			
			% the samples and their projections
			\coordinate (D1) at (-1.8, 1.1);
			\coordinate (D2) at (-1.2, 1.25);
			\coordinate (D3) at (-1.85, 0.5);
			\coordinate (D4) at (-1.7, 1.3);
			\coordinate (D5) at (-1.75, 0.7);
			\coordinate (D6) at (-1.9, 0.72);
			\coordinate (D7) at (-1.55, 0.9);
			
			% a new point and its tangent space
			\filldraw[thin, opacity=0.75, color=red!30, xscale=-0.5, yscale=0.25,shift={(1.75,4.25)}] (0, 0) to (2, -1) to (3,1) to (1, 2) to cycle;
			\draw[thin, color=red, xscale=-0.5, yscale=0.25,shift={(1.75,4.25)}] (0, 0) to (2, -1) to (3,1) to (1, 2) to cycle;
			\filldraw[red] (-1.6, 1.2) circle (0.02) node[below right] {};
			\draw[->, color=red] (-1.6,1.2) to (-1.4,1.25);
			\filldraw[red] (-1.5, 1.1) circle (0.0) node[right] {$A_M$};
			
			% projections of the data onto the tangent space
			\filldraw[blue!50, opacity=0.5] (-1.8,1.2) circle (0.02) node[] {};
			\filldraw[blue!75, opacity=0.8] (-1.21,1.35) circle (0.02) node[] {};
			\filldraw[blue!50, opacity=0.2] (-1.85,0.9) circle (0.02) node[] {};
			\filldraw[blue!60, opacity=0.6] (-1.7,1.35) circle (0.02) node[] {};
			\filldraw[blue!50, opacity=0.5] (-1.75,1.0) circle (0.02) node[] {};
			\filldraw[blue!50, opacity=0.5] (-1.9,1.0) circle (0.02) node[] {};
			\filldraw[blue!50, opacity=0.8] (-1.55,1.1) circle (0.02) node[] {};
			%\node[color=blue] at (-2.0, 1.65) {$P^{M\to A}$};

                \node[color=white] at (-2.0, 1.65) {$P^{M\to A}$};
                \node[color=red] at (-2.0, 1.65) {$\SymSpace_{M}$};
		\end{tikzpicture}
	\end{center}
	\caption{Exponential tilting in the Bures-Wasserstein space $(\CovSpace,\BW)$.
    For each covariance $M\succ 0$, we transform $P$ to $P^{M}$ (top) by re-weight each point $\Sigma\in\CovSpace$ according to the value of $\exp\trace(A_MMt_{M}^{\Sigma}) \propto \exp\trace(A_MM\ell_M(\Sigma))$, where $A_M\in\SymSpace_M$ is chosen so that $P^M$ has Bures-Wasserstein barycenter $M$ (bottom).}
    
   % by doing Euclidean exponential tilting on the logarithmic pushforward of $P$
    
    %(top) as follows: first, we use the embedding defined in \eqref{eqn:log} to obtain a probability measure in the linear space $(\SymSpace_M,\langle\,\cdot\,,\,\cdot\,\rangle_M)$ via the push-forward (left); second we transform the push-forward by classical Euclidean exponential tilting (bottom); third, we use the inverse embedding defined in \eqref{eqn:exp} to obtain the tilted measure $P^{M\to A}$ (right).}
	\label{fig:exp-tilt}
\end{figure}
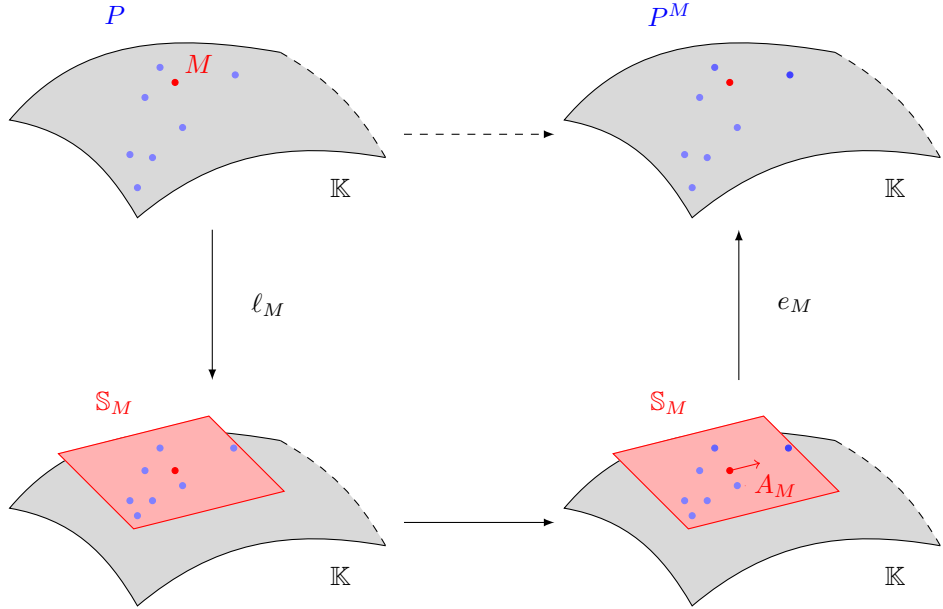

\subsection{Large Deviations Principle}\label{subsec:LDP}

In this subsection we state and prove the main result of the paper establishing the large deviations principle for Bures-Wasserstein barycenters, and prove some fundamental properties of the rate function.
% Towards proving the large deviations principle, we begin by introducing the rate function.

We define the function $I_P:\CovSpacePos\to [0,\infty]$ via
    \begin{equation}\label{eqn:rate-func-def}
         I_P(M) = \sup_{A\in\SymSpace_M}\left(\trace(AM) - \log\int_{\CovSpace}\exp \trace\left(AMt_{M}^{\Sigma}\right)\diff P(\Sigma)\right),
     \end{equation}
     for $M\in\CovSpacePos$, and we observe that this is exactly the optimal value of equation~\eqref{eqn:IP} of Proposition~\ref{prop:tilting}.
     We also defined the set $\effdom$ in equation~\eqref{eqn:effdom}, and we note that it can equivalent be written as $\effdom = \{M\in\CovSpacePos: I_P(M)<\infty\}$, that is, as the \textit{effective domain} of the function $I_P$.

% \begin{definition}
%     Let $I_P:\CovSpacePos\to[0,\infty]$ be defined via
%     \begin{equation}\label{eqn:rate-func-def}
%         I_P(M) = \sup_{A\in\SymSpace_M}\left(\trace(AM) - \log\int_{\CovSpace}\exp \trace\left(AMt_{M}^{\Sigma}\right)\diff P(\Sigma)\right),
%     \end{equation}
%     for $M\in\CovSpacePos$.
% \end{definition}

Let us say that the function $I_P:\CovSpacePos\to [0,\infty]$ is a \textit{good rate function} if its sub-level sets are compact, and that $\{\empbary\}_{n\in\Nbb}$ satisfy a \textit{a large deviations principle (LDP) with respect to $I_P:\CovSpacePos\to [0,\infty]$} if for all measurable $E\subseteq \CovSpacePos$ we have
\begin{equation}\label{eqn:LDP}
    \begin{split}
        -\inf\{I_P(M): M\in E^{\circ}\} &\le \liminf_{n\to\infty}\frac{1}{n}\log \Pbb(\empbary\in E) \\
        &\le \limsup_{n\to\infty}\frac{1}{n}\log \Pbb(\empbary\in E) \le -\inf\{I_P(M): M\in \bar E\},
    \end{split}
\end{equation}
where $E^{\circ}$ and $\bar E$ denote the interior and closure of $E$ with respect to $\BW$, respectively.

\begin{theorem}\label{thm:main}
    $\{\empbary\}_{n\in\Nbb}$ satisfy a large deviations principle in $(\CovSpacePos,\BW)$ with good rate function $I_P$.
\end{theorem}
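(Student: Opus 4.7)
The plan is to deduce the LDP from a Sanov-type theorem for the empirical measures $P_n := \tfrac{1}{n}\sum_{i=1}^{n}\delta_{\Sigma_i}$ in the strong metric space $(\Pcal_2(\CovSpace), W_2)$ with rate function $H(\,\cdot\,|\,P)$, and then to transfer the LDP through the (set-valued) barycenter map $F$; Proposition~\ref{prop:tilting} will identify the resulting rate function as $I_P$. A key preliminary is that Sanov's theorem upgrades to the $W_2$-topology under \textnormal{\hyperref[int]{\textbf{(E)}}}, which follows by combining the classical weak-topology Sanov theorem with exponential $W_2$-tightness of $\{P_n\}$, itself obtained via Lemma~\ref{lem:weak-W2-cpt} and a Chernoff-type estimate on $\int\BW^2(\Sigma,0)\diff P_n(\Sigma)$ exploiting the sub-Gaussian control of \textnormal{\hyperref[int]{\textbf{(E)}}}.

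\textbf{Upper bound.} For a closed set $E \subseteq \CovSpacePos$, define
\begin{equation*}
\mathcal{E}_E := \{Q \in \Pcal_2(\CovSpace) : F(Q)\cap E \neq \emptyset\}.
\end{equation*}
Proposition~\ref{prop:basic-cty} implies that $\mathcal{E}_E$ is relatively $W_2$-closed: if $Q_n\to Q$ in $W_2$ and $M_n \in F(Q_n)\cap E$, a subsequence satisfies $M_{n_k}\to M \in F(Q)$ with $M\in E$. Since $\{\empbary \in E\}\subseteq\{P_n\in\mathcal{E}_E\}$ almost surely, Sanov yields
\begin{equation*}
\limsup_{n\to\infty}\tfrac{1}{n}\log\Pbb(\empbary\in E) \leq -\inf_{Q\in\mathcal{E}_E}H(Q|P),
\end{equation*}
and for each $Q\in\mathcal{E}_E$, choosing $M\in F(Q)\cap E$, Proposition~\ref{prop:tilting} gives $H(Q|P) \geq I_P(M) \geq \inf_{M'\in E}I_P(M')$, completing the upper bound.

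\textbf{Lower bound.} Fix $M \in E^\circ\cap\effdom$ and $r>0$ with $B(M,r)\subseteq E$. Proposition~\ref{prop:tilting} produces $P^M\in\Pcal_2(\CovSpacePos)$ satisfying $F(P^M)=M$, $H(P^M|P) = I_P(M)$, and $dP^M/dP(\Sigma) = \exp(\trace(A_M M t_M^\Sigma) - Z_M)$ for $Z_M := \log\int\exp\trace(A_M M t_M^\Sigma)\diff P(\Sigma)$. The change-of-measure identity reads
\begin{equation*}
\Pbb(\empbary\in B(M,r)) = \Ebb_{(P^M)^{\otimes n}}\left[\exp\!\left(nZ_M - \sum_{i=1}^{n}\trace(A_M M t_M^{\Sigma_i})\right)\mathds{1}\{F(P_n)\in B(M,r)\}\right].
\end{equation*}
Under $(P^M)^{\otimes n}$, Proposition~\ref{prop:basic-cty} applied to the $W_2$-SLLN $P_n\to P^M$ gives $F(P_n)\to M$ almost surely, while the scalar SLLN together with the fixed-point equation $\int(t_M^\Sigma-\Id)\diff P^M = 0$ gives $\tfrac{1}{n}\sum_{i=1}^{n}\trace(A_M M t_M^{\Sigma_i})\to\trace(A_M M)$ almost surely. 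Restricting to the event $\{F(P_n)\in B(M,r)\}\cap\{\tfrac{1}{n}\sum_{i=1}^{n}\trace(A_M M t_M^{\Sigma_i})\leq \trace(A_M M)+\varepsilon\}$, whose $(P^M)^{\otimes n}$-probability tends to $1$, bounds the integrand below by $\exp(-n(I_P(M)+\varepsilon))$, so $\liminf_n\tfrac{1}{n}\log\Pbb(\empbary\in B(M,r))\geq -I_P(M)-\varepsilon$. Sending $\varepsilon\to 0$ and taking the supremum over $M\in E^\circ\cap\effdom$ completes the argument.

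\textbf{Main obstacles.} The chief technical hurdles are (i) establishing Sanov's theorem in the $W_2$-topology on $\Pcal_2(\CovSpace)$ under \textnormal{\hyperref[int]{\textbf{(E)}}} in the possibly infinite-dimensional Hilbert setting; (ii) handling the possibility that limits of positive barycenters in the upper-bound argument escape to the boundary of $\CovSpacePos$, addressed via \textnormal{\hyperref[reg]{\textbf{(R)}}} and the convention $I_P \equiv \infty$ off $\effdom$; and (iii) verifying that $P^M$ inherits from $P$ the properties needed for both SLLNs --- namely $P^M \in \Pcal_2(\CovSpacePos)$, $P^M\{\Sigma\succ 0\}=1$, and integrability of $\trace(A_M M t_M^\Sigma)$ under $P^M$ --- which follow from absolute continuity, \textnormal{\hyperref[reg]{\textbf{(R)}}}, and Lemma~\ref{lemma:bounds}.
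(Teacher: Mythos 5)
Your lower bound is a genuinely different argument from the paper's, and it is essentially correct: rather than deducing both bounds from a contraction-type principle, you run a classical Cram\'er tilting argument, changing measure to $P^{M}=P^{M\to A_M}$ from Proposition~\ref{prop:tilting}, using the fixed-point equation $\int(t_M^{\Sigma}-\Id)\diff P^{M}=0$ to identify $M$ as the barycenter of $P^{M}$, and combining the $W_2$-SLLN for $\bar P_n$ under $(P^{M})^{\otimes n}$ with Proposition~\ref{prop:basic-cty} (plus uniqueness of $F(P^{M})$) to make the event $\{\empbary\in B(M,r)\}$ typical under the tilt. This is self-contained, avoids any Sanov-type input for the lower bound, and is a legitimate alternative to the paper's route, which obtains the lower bound as part of Garcia's approximate contraction principle applied after the weak-topology Sanov theorem.

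The upper bound, however, rests on a false preliminary. You assert that Sanov's theorem upgrades to the $W_2$-topology on $\Pcal_2(\CovSpace)$ under \textbf{(E)} alone. It does not: as recorded in Remark~\ref{rem:cty}, the empirical measures satisfy the LDP in $(\Pcal_2(\CovSpace),W_2)$ with rate $H(\,\cdot\,|\,P)$ if and only if $P$ satisfies \textbf{(E$_2$)}, i.e.\ $\int\exp(\lambda\BW^2(\Sigma,0))\diff P<\infty$ for \emph{all} $\lambda>0$, which is strictly stronger than \textbf{(E)}. The flaw in your tightness derivation is that a Chernoff bound on $\int\BW^2(\Sigma,0)\diff\bar P_n$ under \textbf{(E)} controls, at every exponential scale, only the event that $\bar P_n$ leaves a $W_2$-\emph{bounded} weakly closed set; $W_2$-compactness additionally requires uniform integrability of the second moments, and obtaining that at every exponential scale $L$ is exactly what forces \textbf{(E$_2$)}. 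The paper's proof is designed precisely to sidestep this: it keeps Sanov in $(\Pcal(\CovSpace),\weak)$ and replaces the full $W_2$-LDP by Garcia's approximate contraction principle, using only $\sup_n\frac{1}{n}\log\Pbb(\bar P_n\notin B_L)\le -L$ for the second-moment balls $B_L$ together with the continuity of $F:(B_L,\weak)\to(\CovSpacePos,\BW)$. Your upper bound can be repaired in the same spirit --- estimate $\Pbb(\bar P_n\in\mathcal{E}_E)\le\Pbb(\bar P_n\in\mathcal{E}_E\cap B_L)+\Pbb(\bar P_n\notin B_L)$ and let $L\to\infty$ --- but not by invoking a $W_2$-Sanov theorem under \textbf{(E)}. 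Finally, the theorem also asserts that $I_P$ is a \emph{good} rate function; in the paper this is delivered by Garcia's lemma, whereas your proposal never addresses compactness of the sublevel sets of $I_P$ and would need a separate argument for it.
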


\begin{proof}
    By Sanov's theorem the sequence of empirical measures $\{\bar P_n\}_{n\in\Nbb}$, defined as
    $$
    \bar P_n := \frac{1}{n}\sum_{i=1}^n\delta_{\Sigma_i},
    $$
    for $n\in\Nbb$, satisfy a LDP in $(\Pcal(\CovSpace),\weak)$ with good rate function $H(\,\cdot\, |\, P):\Pcal(\CovSpace)\to[0,\infty]$.
    The next step will be to use this to deduce a large deviations principle for $\{F(\bar P_n)\}_{n\in\Nbb}$ by applying the map $F:(\Pcal(\CovSpace),\weak)\to (\CovSpacePos,\BW)$.
    To do this, we use the following approximate contraction principle of Garcia \cite[Variation~2.8]{GarciaContraction}:
    If $\{B_{L}\}_{K\in\Nbb}$ is a non-decreasing family of closed sets in $(\Pcal(\CovSpace),\weak)$  such that
    \begin{enumerate}
        \item[(i)] $\Pbb(\bar P_n \in\bigcup_{L\in\Nbb}B_L)=1$ for all $n\in\Nbb$,
        \item[(ii)] $\sup_{n\in\Nbb}\frac{1}{n}\log\Pbb(\bar P_n \notin B_{L}) \le -L$ for all $L\in\Nbb$, and
        \item[(iii)] $F:(B_{L},\weak)\to (\CovSpacePos,\BW)$ is continuous for each $L\in\Nbb$,
    \end{enumerate}
    then $\{F(\bar P_n)\}_{n\in\Nbb}$ satisfies a large deviations principle in $(\CovSpacePos,\BW)$ with good rate function $J_P:\CovSpacePos\to [0,\infty]$ defined via
    \begin{equation*}
		J_P(M) := \inf\left\{H(Q\,|\,P): Q\in \bigcup_{L\in\Nbb}B_{L}, F(Q) = M\right\}
	\end{equation*}
    for all $M\in\CovSpacePos$.
    To verify these conditions, we will set
    \begin{equation*}
        B_{L}:= \left\{Q\in\Pcal(\CovSpace): \int_{\CovSpace}\BW^2(\Sigma,0)\diff Q(\Sigma) \le c_{L}\right\}
    \end{equation*}
    for a suitable $c_{L}>0$ for each $L\in\Nbb$.
    To see that such sets are $\weak$-closed, note that if $\{Q_n\}_{n\in\Nbb}$ in $B_{L}$ have $Q_n\to Q$ weakly for some $Q\in \Pcal(\CovSpace)$, then the Portmanteau lemma yields
    \begin{equation*}
        \int_{\CovSpace}\BW^2(\Sigma,0)\diff Q(\Sigma) \le \liminf_{n\to\infty}\int_{\CovSpace}\BW^2(\Sigma,0)\diff Q_n(\Sigma) \le c_{L}
    \end{equation*}
    To see how to choose $c_{L}$ for each $L\in\Nbb$, observe that assumption~\textnormal{\hyperref[int]{\textbf{(E)}}} implies that the real-valued random variables
    \begin{equation*}
        \left\{\frac{1}{n}\sum_{i=1}^{n}\BW^2(\Sigma_i,0)\right\}_{n\in\Nbb}
    \end{equation*}
    form an exponentially tight sequence.
    Consequently, for each $L\in\Nbb$ there exists a constant $c_{L}>0$ such that we have
    \begin{equation}\label{eqn:garcia-1}
        \sup_{n\in\Nbb}\frac{1}{n}\log \Pbb\left(\frac{1}{n}\sum_{i=1}^{n}\BW^2(\Sigma_i,0) > c_{L}\right) \le -L.
    \end{equation}
    Also, by enlarging $c_{L}$ for each $L\in\Nbb$ if necessary, we can assume that $c_{L}\to \infty$ as $L\to\infty$ so that in particular we have $\bigcup_{L\in\Nbb}B_L = \Pcal_2(\CovSpace)$.
    Now see that \eqref{eqn:garcia-1} implies condition (ii) and that $\bar P_n$ having bounded support for all $n\in\Nbb$ almost surely implies condition (i), so it only remains to verify condition (iii).
    To do this, fix $L\in\Nbb$ and suppose that $\{Q_n\}_{n\in\Nbb}$ and $Q$ in $B_{L}$ have $Q_n\to Q$ weakly.
    Then $\{Q_n\}_{n\in\Nbb}$ is $\weak$-pre-compact and $W_2$-bounded, hence Lemma~\ref{lem:weak-W2-cpt} implies that it is $W_2$-pre-compact.
    Thus, by the uniqueness of limits, we get $W_2(Q_n,Q)\to 0$.
    Finally, Proposition~\ref{prop:basic-cty} implies $\BW(F(Q_n),F(Q))\to 0$, as needed.    
    Consequently, we have shown that $\{\empbary\}_{n\in\Nbb}$ satisfies a large deviations principle in $(\CovSpacePos,\BW)$ with good rate function
    \begin{equation*}
		J_P(M) := \inf\left\{H(Q\,|\,P): Q\in \Pcal_2(\CovSpace), F(Q) = M\right\}.
    \end{equation*}
  %   In other words, $J_P(M)$ is exactly the value of the optimization problem
  %   \begin{equation*}
		% \begin{cases}
		% 	\textnormal{minimize} &H(Q\, | \, P) \\
		% 	\textnormal{over } & Q\in \Pcal_2(\CovSpacePos) \\
		% 	\textnormal{with} & F(Q) = M.
		% \end{cases}
  %   \end{equation*}
  %   Now we perform some manipulations to show that we in fact have $J_P = I_P$.
  %   Indeed, let us fix $M\in\CovSpacePos$.
We thus conclude by Proposition \ref{prop:tilting}, which shows that $J_P = I_P$.
\end{proof}

\begin{remark}\label{rem:cty}
The difficulty in the proof of Theorem~\ref{thm:main} comes from the fact that $F:(\Pcal(\CovSpacePos),\weak)\to (\CovSpacePos,\BW)$ is neither continuous nor well-defined everywhere, so it is natural to try to simplify things by replacing Sanov's theorem with an LDP for $\{\bar P_n\}_{n\in\Nbb}$ in some stronger topology so that the standard contraction principle can be applied.
Indeed, it is known \cite[Theorem~1]{SanovWasserstein} that, for $p\ge 1$, the empirical measures $\{\bar P_n\}_{n\in\Nbb}$ satisfy a large deviations principle in $(\Pcal_p(\CovSpace),W_p)$ with good rate function $H(\,\cdot\,|\,P)$ if and only if the population distribution $P$ satisfies \textnormal{\hyperref[E_p]{\textbf{(E$_p$)}}}.
This leads one to the following alternative strategies for the proof:
\begin{enumerate}
    \item[(a)] If $\dim(\Hilbert)<\infty$ then $(\CovSpacePos,\BW)$ is a Heine-Borel metric space, so it follows from general theory of Fr\'echet means \cite[Theorem~4.6]{Relaxation} that $F:(\Pcal_1(\CovSpacePos),W_1)\to (\CovSpacePos,\BW)$ is continuous.
    (There is a more general definition of $F$ which is well-defined for probability measures in $\Pcal_1(\CovSpacePos)$.)
    Thus, in the finite-dimensional setting, it suffices to use assumption \textnormal{\hyperref[E_p]{\textbf{(E$_1$)}}} in place of \textnormal{\textnormal{\hyperref[int]{\textbf{(E)}}}}.
    \item[(b)] If $\dim(\Hilbert)=\infty$, then we only have the continuity of $F:(\Pcal_2(\CovSpacePos),W_2)\to (\CovSpacePos,\BW)$ established in Proposition~\ref{prop:basic-cty}.
    Thus, in the general infinite-dimensional setting, a proof based on the standard contraction principle must use assumption \textnormal{\hyperref[E_p]{\textbf{(E$_2$)}}} in place of \textnormal{\textnormal{\hyperref[int]{\textbf{(E)}}}}.
\end{enumerate}
We do not know whether $F:(\Pcal_1(\CovSpacePos),W_1)\to (\CovSpacePos,\BW)$ is continuous in the general infinite-dimensional setting, and we believe this would be an interesting question for future work.
\end{remark}

We will now observe that $I_P$, in some sense, contains within it the usual rate function from Cr\'amer's theorem.
This should not come as a surprise, since $(\CovSpace,\BW)$ contains subsets which are isometric to a Euclidean space; for instance, geodesic segments are isometric to the usual the Euclidean unit interval.
To see this manifest in the rate function, consider the special case that $P$ is concentrated on a geodesic segment $\gamma:[0,1]\to \CovSpacePos$.
On the one hand, it easy to show that we have $I_P(M) = \infty$ for $M\notin \gamma$.
On the other hand, for $M\in\gamma$ we can make some simplifications.
We may express $t_M^{\Sigma}-\Id = (\gamma^{-1}(\Sigma)-\gamma^{-1}(M))\gamma'(0)|\gamma|$ where $|\gamma|$ is the length of $\gamma$, and we note by Proposition~\ref{prop:tilting} that a maximizing $A\in\SymSpace_M$ must be aligned to $\gamma$, hence we can substitute $A = a\gamma'(0)$.
Thus we have:
\begin{align*}
	I_P(M) %&= \sup_{A\in\SymSpace_M}\left( - \log\int_{\CovSpace}\exp \trace\left(AM(t_{M}^{\Sigma}-\Id)\right)\diff P(\Sigma)\right) \\
	&= \sup_{a\in \Rbb}\left( - \log\int_{\CovSpace}\exp \trace\left(a\gamma'(0)M(\gamma^{-1}(\Sigma) - \gamma^{-1}(M))|\gamma|\gamma'(0)\right)\diff P(\Sigma)\right) \\
	&= \sup_{a\in \Rbb}\left( - \log\int_{\CovSpace}\exp\left( a|\gamma|\,\|\gamma'(0)\|_M (\gamma^{-1}(\Sigma) - \gamma^{-1}(M)\right)\diff P(\Sigma)\right) \\
	&= \sup_{a'\in \Rbb}\left(a'\gamma^{-1}(M) - \log\int_{\CovSpace}\exp\left( a' \gamma^{-1}(\Sigma) \right)\diff P(\Sigma)\right)
\end{align*}
for all $M\in\gamma$, where we applied the change of variables $a' = a|\gamma|\|\gamma'(0)\|_M$.
Now we see that $I_P\circ \gamma^{-1}$ is exactly the Fenchel-Legendre transform of the culumant generating function of $\gamma^{-1}(\Sigma)$ in $\Rbb$.

% \subsection{Properties of the Rate Function}\label{subsec:rate-fn}

We conclude this section by proving some of the fundamental properties of the rate function $I_P$.

% \adam{I switched the order of the Lemma and the Proposition below. This is because we need the Lemma in order to give the definition of the $A(\cdot)$ function that is used in the convexity statement (d) of the proposition. Still need to edit this section a bit, though.}
% Before we do this, we state and prove an auxiliary result which provides first-order optimality conditions for the optimization problem appearing in the definition of $I_P$.

\begin{proposition}\label{prop:rate-function-properties}
    The function $I_P:\CovSpacePos\to[0,\infty]$ enjoys the following properties:
    \begin{itemize}
        \item[(a)] $I_P(M) = 0$ if and only if $M=\bary$.
        \item[(b)] $I_P$ is lower semi-continuous.
        \item[(c)] $I_P$ is coercive and satisfies $I_P(M)/\BW(M,0) \to \infty$ as $\BW(M,0)\to\infty$.
        \item[(d)] $I_P$ is strictly convex along continuous paths $\gamma:[0,1]\to\CovSpacePos$ of the form
        $$
        \gamma(t) := F\left((1-t)P^{M_0} + tP^{M_1}\right)
        $$
        for $M_0,M_1\in\effdom$.
    \end{itemize}
\end{proposition}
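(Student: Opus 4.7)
My plan for (a) begins by noting that $I_P \ge 0$ everywhere, by evaluating the supremum in \eqref{eqn:IP} at $A = 0$. To show $I_P(\bary) = 0$, I would apply Jensen's inequality to the exponential inside the log and cancel the resulting linear expression using the fixed-point identity \eqref{eqn:pop-fixed-pt} for $\bary$; this forces $\trace(A\bary) \le \log\int \exp\trace(A\bary\, t_\bary^\Sigma)\diff P(\Sigma)$ for all $A \in \SymSpace_\bary$, hence $I_P(\bary) \le 0$. Conversely, if $I_P(M) = 0$ then the concave map $A \mapsto \trace(AM) - \log\int \exp\trace(AMt_M^\Sigma)\diff P(\Sigma)$ attains its maximum at $A = 0$; setting its Fr\'echet derivative there to zero---justified by dominated convergence under \textnormal{\hyperref[int]{\textbf{(E)}}}, exactly as in the proof of Proposition~\ref{prop:tilting}---recovers the fixed-point equation $\int(t_M^\Sigma - I)\diff P(\Sigma) = 0$, and hence $M = \bary$. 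Part (b) is automatic: by Theorem~\ref{thm:main}, $I_P$ is the good rate function of a large deviations principle, and such rate functions are lower semi-continuous by definition (alternatively, by construction in Garcia's approximate contraction principle used to prove Theorem~\ref{thm:main}).

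For the super-linear growth in (c), I would test the supremum in \eqref{eqn:rate-func-def} at $A = \lambda I \in \SymSpace_M$ for a small fixed $\lambda > 0$. Using the estimate $\trace(\lambda I M t_M^\Sigma) = \lambda\trace((M^{\sfrac{1}{2}}\Sigma M^{\sfrac{1}{2}})^{\sfrac{1}{2}}) \le \lambda \BW(M,0)\BW(\Sigma,0)$ from Lemma~\ref{lemma:bounds}, together with the sub-Gaussianity of $\BW(\Sigma,0)$ under $P$ guaranteed by \textnormal{\hyperref[int]{\textbf{(E)}}}, one obtains
\begin{equation*}
    I_P(M) \ge \lambda \BW(M,0)^2 - \log\int_{\CovSpace}\exp\bigl(\lambda \BW(M,0)\BW(\Sigma,0)\bigr)\diff P(\Sigma) \ge c_1\BW(M,0)^2 - c_2\BW(M,0)
\end{equation*}
for constants $c_1,c_2 > 0$ depending on $\lambda$ and $P$, which yields $I_P(M)/\BW(M,0) \to \infty$ after dividing. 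For compactness of sub-level sets, I would rewrite $\{I_P \le c\}$ via Proposition~\ref{prop:tilting} as (essentially) the $F$-image of $\{H(\,\cdot\,|\,P) \le c\} \subseteq \Pcal_2(\CovSpacePos)$; this set is weakly pre-compact by Donsker--Varadhan and also $W_2$-bounded by the Donsker--Varadhan variational formula combined with \textnormal{\hyperref[int]{\textbf{(E)}}}. Lemma~\ref{lem:weak-W2-cpt} then upgrades this to $W_2$-pre-compactness, and Proposition~\ref{prop:basic-cty} transfers the conclusion to $\BW$-pre-compactness of $\{I_P \le c\}$, whose closedness follows from lower semi-continuity proved in (b).

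Part (d) is the cleanest: the measure $P_t := (1-t)P^{M_0} + tP^{M_1}$ satisfies $F(P_t) = \gamma(t)$ by construction, so it is a feasible competitor in \eqref{eqn:JP} for $I_P(\gamma(t))$, giving $I_P(\gamma(t)) \le H(P_t\,|\,P)$. Strict convexity of the relative entropy $H(\,\cdot\,|\,P)$ in its first argument then yields, for every $t \in (0,1)$ and $M_0 \ne M_1$ (which forces $P^{M_0} \ne P^{M_1}$ via uniqueness in Proposition~\ref{prop:tilting}, since $F(P^{M_i}) = M_i$),
\begin{equation*}
    H(P_t\,|\,P) < (1-t)H(P^{M_0}\,|\,P) + t H(P^{M_1}\,|\,P) = (1-t)I_P(M_0) + tI_P(M_1),
\end{equation*}
where the final equality uses the optimality of $P^{M_i}$ for \eqref{eqn:JP} at $M_i$, again by Proposition~\ref{prop:tilting}. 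Combining the two displays gives the desired strict convexity along $\gamma$.

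The main obstacle is the coercivity step in (c): the infinite-dimensional setting precludes local compactness of $(\CovSpacePos,\BW)$, so $\BW$-boundedness of sub-level sets alone does not yield compactness, and one must bridge the weak and $W_2$ topologies through Lemma~\ref{lem:weak-W2-cpt} and Proposition~\ref{prop:basic-cty} to carry out the argument. The other three parts reduce essentially to combining the duality in Proposition~\ref{prop:tilting} with standard properties of concave functions, sub-Gaussian integrals, and relative entropy.
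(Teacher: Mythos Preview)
Your proposal is correct throughout, and parts (a) and (d) match the paper's argument essentially line-for-line. Two genuine differences are worth flagging.

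For (b), you invoke Theorem~\ref{thm:main} to get lower semi-continuity for free as a property of good rate functions. The paper explicitly acknowledges this shortcut is available but elects to prove (b) directly from the definition of $I_P$: in finite dimensions via dominated convergence (using the bound \eqref{eqn:integrand-bound}), and in infinite dimensions by approximating the optimal $A\in\SymSpace_M$ with finite-rank truncations $A_k=\Pcal_k A\Pcal_k$ and passing to the limit. Your route is shorter and perfectly valid given the paper's ordering; the paper's route is self-contained and gives some insight into how the rate function behaves under perturbations of $M$.

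For (c), you test at $A=\lambda I$ for fixed small $\lambda$ and exploit the full sub-Gaussian MGF bound to obtain a \emph{quadratic} lower bound $I_P(M)\ge c_1\BW(M,0)^2-c_2\BW(M,0)$, which of course implies super-linear growth. The paper instead takes the normalized choice $A=\lambda I/\BW(M,0)$, so that the resulting $P$-integral no longer depends on $M$; this yields $I_P(M)\ge \lambda\BW(M,0)-\log\int\exp(\lambda\BW(\Sigma,0))\diff P(\Sigma)$, and the conclusion follows by dividing and sending $\lambda\to\infty$. The paper's argument thus only uses finiteness of exponential moments of $\BW(\Sigma,0)$ (i.e.\ \textnormal{\hyperref[E_p]{\textbf{(E$_1$)}}}), while yours uses the stronger \textnormal{\hyperref[int]{\textbf{(E)}}}---in fact, your computation is essentially the one the paper later carries out in Corollary~\ref{cor:Hoeffding}. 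One minor point: your paragraph on compactness of sub-level sets is not needed for (c) as stated, since ``coercive'' here just means $I_P(M)\to\infty$ as $\BW(M,0)\to\infty$, which already follows from the growth bound; goodness of $I_P$ is established separately in Theorem~\ref{thm:main}.
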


Before we get into the proof of this result, let us give some remarks.
% First, we note that this result establishes that $I_P$ is a rate function but not that it is a \textit{good} rate function; we will indeed show that $I_P$ is good rate function, but this will be done in the course of proving Theorem~\ref{thm:main}.
First, we note that (a) and (b) above can already be deduced from the proof of Theorem~\ref{thm:main}, but we find it more instructive to prove them directly from the definition of $I_P$.
 Second, we note that these properties are largely analogous to the properties of the rate function appearing in Cr\'amer's theorem.
Finally, note that we establish that $I_P$ is convex along certain interpolating paths in $(\CovSpacePos,\BW)$.
While we do not show that  $I_P$ is geodesically convex, we conjecture this to be true. This is relevant also for numerical purposes, and we return to this in Subsection \ref{subsec:numerical}.
% , except that we lack a statement about convexity.
% While we have some partial results about convexity, we postpone this discussion until later in the paper.

\begin{proof}[Proof of Proposition~\ref{prop:rate-function-properties}]
    For (a), let's first show that $I_P(\bary) = 0$.
    To do this, we simply use Jensen's inequality and the fixed-point equation \eqref{eqn:pop-fixed-pt} to get:
    \begin{align*}
        I_P(\bary) &= \sup_{A\in\SymSpace_M} - \log\int_{\CovSpace}\exp \trace\left(A\bary(t_{\bary}^{\Sigma}-\Id)\right)\diff P(\Sigma) \\
        &\le \sup_{A\in\SymSpace_M} -\int_{\CovSpace}\trace\left(A\bary(t_{\bary}^{\Sigma}-\Id)\right)\diff P(\Sigma) \\
        &= \sup_{A\in\SymSpace_M} -\trace\left(A\bary\int_{\CovSpace}(t_{\bary}^{\Sigma}-\Id)\diff P(\Sigma)\right) = 0,
    \end{align*}
    as claimed.
    Second, let's assume $I_P(M) = 0$ for $M\in\CovSpacePos$ and use this to show that $M=\bary$.
    By Proposition~\ref{prop:tilting}, there exists some $A\in\SymSpace_M$ such that
    \begin{equation*}
        \log\int_{\CovSpace}\exp \trace\left(AM(t_{M}^{\Sigma}-\Id)\right)\diff P(\Sigma) = I_P(M) = 0.
    \end{equation*}
    But $0\in\SymSpace_M$ also achieves this value, and Proposition~\ref{prop:tilting} guarantees that the maximizer is unique.
    Therefore, we have $A=0$, and Proposition~\ref{prop:tilting} implies that $M$ is the Bures-Wasserstein barycenter of $P^{M\to 0} = P$.
    Thus, $M=\bary$.

    We now turn to (b). Let us first consider the finite-dimensional case $\Hilbert=\Rbb^d$ we first show that the function
    \begin{equation*}
        M\mapsto \int_{\CovSpace}\exp \trace\left(AM(t_{M}^{\Sigma}-\Id)\right)\diff P(\Sigma)
    \end{equation*}
    is continuous for each fixed $A\in\SymSpace$.
    To do this, define the function
    \begin{equation*}
        f(A,\Sigma,M):=\exp \trace\left(AM(t_{M}^{\Sigma}-\Id)\right),
    \end{equation*}
    and we note that this is continuous as a function of $M\in\CovSpacePos$, for fixed $A\in\SymSpace$ and $\Sigma\in\CovSpacePos$ by the differentiability result in \cite{Kroshnin}.

    Now suppose that $\{M_n\}_{n\in\Nbb}$ and $M$ in $\CovSpacePos$ have $\BW(M_n, M)\to 0$ as $n\to\infty$.
    Then use equation~\eqref{eqn:integrand-bound} of Lemma~\ref{lemma:bounds} and the triangle inequality to get
    \begin{align*}
        \trace(AM_n(t_{M_n}^{\Sigma}-\Id)) &\le\|A\|_2\BW(M_n,0)\BW(M_n,\Sigma) \\
        &\le\|A\|_2\BW(M_n,0)(\BW(M_n,M) + \BW(M,\Sigma))
    \end{align*}
    for all $\Sigma\in\CovSpacePos$.
    Now $\{M_n\}_{n\in\Nbb}$ converging implies that it is bounded, hence
    \begin{equation*}
        \lambda := \max\left\{\sup_{n\in\Nbb}\BW(M_n,0),\sup_{n\in\Nbb}\BW(M_n,M)\right\} < \infty.
    \end{equation*}
    Therefore, we have the uniform upper bound:
    \begin{equation*}
        \sup_{n\in\Nbb}\exp \trace(AM_n(t_{M_n}^{\Sigma}-\Id)) \le \exp\left(\lambda^2\|A\|_2\right)\exp\left(\lambda\|A\|_2\BW(M,\Sigma)\right).
    \end{equation*}
    Condition~\textnormal{\hyperref[int]{\textbf{(E)}}} guarantees that the right side is $P$-integrable over $\Sigma\in\CovSpacePos$, so dominated convergence applies, and this proves (b) in the finite dimensional-case.

    To prove (b) in general, we use a bit more caution.
    Let $\{e_i\}_{i\in\Nbb}$ be a complete orthonormal system (CONS) in $\Hilbert$, and for $k\in\Nbb$ write $\Pcal_k:=\sum_{i=1}^{k}e_i\otimes e_i:\SymSpace \to \SymSpace$ for the rank-$k$ projection operator.
    Now suppose that $\{M_n\}_{n\in\Nbb}$ and $M$ in $\CovSpacePos$ have $\BW(M_n,M)\to 0$.
    Use Proposition~\ref{prop:tilting} to get some $A\in\SymSpace_M$ achieving the supremum in the definition of $I_P(M)$, and let $A_k:= \Pcal_k A\Pcal_k$ for all $k\in\Nbb$, which we know satisfy $\BW(A_k,A)\to 0$ as $k\to\infty$ by \cite[][Lemma 5]{Masarotto}.
    First observe that, similarly to the proof of Proposition~\ref{prop:rate-function-properties}, the use of dominated convergence entails that:
    \begin{align*}
        \lim_{k\rightarrow \infty}&\left(  \trace(A_kM) - \log\int_{\CovSpace}\exp \trace\left(A_k Mt_{M}^{\Sigma}\right)\diff P(\Sigma)\right) \\
        &=\trace(A M) - \log\int_{\CovSpace}\exp \trace\left(A Mt_{M}^{\Sigma}\right)\diff P(\Sigma) = I_P(M).
    \end{align*}
    Now let $\varepsilon>0$ be arbitrary.
    By the previous display, there exists $k\in\Nbb$ sufficiently large so that we have
    \begin{align*}
        \trace(A_kM) - \log\int_{\CovSpace}\exp \trace\left(A_k Mt_{M}^{\Sigma}\right)\diff P(\Sigma) > I_P(M) - \varepsilon.
    \end{align*}
    We see that $A_k$ has $\|A_k\|_2 < \infty$, so it follows that $A_k\in\SymSpace_{M_n}$ for all $n\in\Nbb$. Furthermore, note that:
    for any $k\in\Nbb$, we have that
    \begin{align*}
        \trace(A_kMt_M^{\Sigma} - A_kM_nt_{M_n}^{\Sigma}) 
    \leq& \|A_k\|_2 \trace( Mt_M^{\Sigma} - M_nt_{M_n}^{\Sigma}) 
    \\=& \|A_k\|_2 \trace( (M^{\sfrac{1}{2}}\Sigma M^{\sfrac{1}{2}})^{\sfrac{1}{2}} - (M_n^{\sfrac{1}{2}}\Sigma M_n^{\sfrac{1}{2}}) ^{\sfrac{1}{2}}) \to 0,
    \end{align*}    
    as $n\to\infty$.
    Therefore, again (similarly) by dominated convergence:    
    \begin{align*}
        \lim_{n\to\infty}&\left(\trace(A_kM_n) - \log\int_{\CovSpace}\exp \trace\left(A_k M_nt_{M_n}^{\Sigma}\right)\diff P(\Sigma)\right) \\
        &= \trace(A_kM) - \log\int_{\CovSpace}\exp \trace\left(A_k Mt_{M}^{\Sigma}\right)\diff P(\Sigma).
    \end{align*}
    In particular, the preceding two displays show:
    \begin{align*}
        \liminf_{n\to\infty}I_P(M_n) &= \liminf_{n\to\infty}\sup_{A\in \SymSpace_{M_n}}\left(\trace(A_kM_n) - \log\int_{\CovSpace}\exp \trace\left(A_k M_nt_{M_n}^{\Sigma}\right)\diff P(\Sigma)\right) \\
        &\ge \lim_{n\to\infty}\left(\trace(A_kM_n) - \log\int_{\CovSpace}\exp \trace\left(A_k M_nt_{M_n}^{\Sigma}\right)\diff P(\Sigma)\right) \\
        &= \trace(A_kM) - \log\int_{\CovSpace}\exp \trace\left(A_k Mt_{M}^{\Sigma}\right)\diff P(\Sigma) \\
        &> I_P(M) - \varepsilon.
    \end{align*}
    Now taking $\varepsilon\to 0$ shows
    \begin{equation*}
        \liminf_{n\to\infty}I_P(M_n) \ge I_P(M),
    \end{equation*}
    as needed.
    
    Next we prove the coercivity statement (c).
	To do this, take arbitrary $M\in \CovSpacePos$ and $\lambda \ge 0$, and set $A:= \lambda I/\BW(M,0)$.
    Then we can use equation \eqref{eqn:tracebound} of Lemma~\ref{lemma:bounds} to compute:
        \begin{align*}
            I_P(M) &\ge \trace\left(\frac{\lambda I}{\BW(M,0)}\cdot M\right) - \log\int \exp\left(\frac{\lambda\trace(M t_{M}^{\Sigma})}{\BW(M,0)}\right)\diff P(\Sigma)
		\\
            &= \lambda\BW(M,0) - \log\int \exp\left(\frac{\lambda\trace(M t_{M}^{\Sigma})}{\BW(M,0)}\right)\diff P(\Sigma)
		\\
            &\ge \lambda\BW(M,0) - \log\int \exp\left(\lambda \BW(\Sigma,0)\right)\diff P(\Sigma).
        \end{align*}
        Since the second term is finite by condition~\textnormal{\hyperref[int]{\textbf{(E)}}}, we can divide by $\BW(M,0)$ and take $\BW(M,0)\to\infty$, and thus we have proven
        \begin{equation*}
            \liminf_{\BW(M,0)\to\infty}\frac{I_P(M)}{\BW(M,0)} \ge \lambda.
        \end{equation*}
    	As $\lambda\ge 0$ was arbitrary, taking $\lambda\to \infty$ yields
        \begin{equation*}
            \lim_{\BW(M,0)\to\infty}\frac{I_P(M)}{\BW(M,0)} =\infty,
        \end{equation*}
        as claimed.

        Finally, we prove the convexity statement (c).
        It follows from Proposition~\ref{prop:basic-cty} that $M$ is a continuous curve connecting $M_0$ to $M_1$.
        And, for all $0\le t \le 1$ we can use Proposition~\ref{prop:tilting} and the well-known strict convexity of relative entropy to compute:
        \begin{align*}
            I_P(\gamma(t)) &= \inf\{H(Q\,|\,P): Q\in\Pcal_2(\CovSpacePos), F(Q) = \gamma(t)\} \\
            &\le H\left((1-t)P^{M_0} + tP^{M_1}\,|\,P\right) \\
            &\le (1-t)H\left(P^{M_0}\,|\,P\right)+tH\left(P^{M_1}\,|\,P\right) \\
            &= (1-t)I_P(M_0) + tI_P(M_1),
        \end{align*}
        as needed.
\end{proof}

\iffalse

\begin{remark}
    The proof of Proposition~\ref{prop:rate-function-properties} relied on \textnormal{\hyperref[E_p]{\textbf{(E$_1$)}}} but not on \textnormal{\hyperref[int]{\textbf{(E)}}}.
    In fact, it can be shown that, under assumption \textnormal{\hyperref[int]{\textbf{(E)}}}, one can upgrade the strictly super-linear growth bound of (c) to a quadratic growth bound.
    See the proof of Corollary~\ref{cor:Hoeffding} for details.
\end{remark}

\fi

\subsection{Consequences}
\label{subsec:consequences}
In this subsection, we explore some consequences of Theorem~\ref{thm:main} for the study of concentration of measure for Bures-Wasserstein barycenters.
In fact, we can use the theory of large deviations to both analyze the exponential rate of decay of rare events of interest and to describe \textit{how} such rare events occur.
We focus on the rare event $E_r:=\{\BW(M_n^{\ast},M^{\ast})\ge r\}$ for $r>0$, though we believe that this strategy may be useful for other rare events of interest.
As in the previous subsection, we assume condition~\textnormal{\hyperref[int]{\textbf{(R)}}} and condition~\textnormal{\hyperref[int]{\textbf{(E)}}} throughout.

First, we consider probability $\Pbb(E_r)$ and analyze its exponential rate of decay.
Note that, under condition~\textnormal{\hyperref[int]{\textbf{(E)}}}, if $\Sigma$ is distributed according to $P$ then $\BW(\Sigma,0)$ has a sub-Gaussian distribution.
Thus, we may write $\sigma^2$ for its sub-Gaussian norm, and $\mu:=\int_{\CovSpace}\BW(\Sigma,0)\diff P(\Sigma)$ for its expectation.
Then we have the following:

\begin{corollary}\label{cor:Hoeffding}
    For any $r\ge0$, we have existence of the limit
    \begin{equation*}
        \Phi_P(r):= -\lim_{n\to\infty}\frac{1}{n}\log\Pbb(\BW(M_n^{\ast},M^{\ast}) \ge r),
    \end{equation*}
    and the function $\Phi_P:[0,\infty)\to[0,\infty]$ satisfies
    \begin{equation}\label{eqn:Hoeffding}
        \liminf_{r\to\infty} \frac{\Phi_P(r)}{r^2} \ge \frac{1}{2\sigma^2}.
    \end{equation}
\end{corollary}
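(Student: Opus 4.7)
The plan is to apply Theorem~\ref{thm:main} to the closed set $E_r := \{M \in \CovSpacePos : \BW(M, \bary) \geq r\}$, whose $\BW$-interior is $E_r^\circ = \{M \in \CovSpacePos : \BW(M, \bary) > r\}$. Writing $\phi(r) := \inf\{I_P(M) : M \in E_r\}$ and $\psi(r) := \inf\{I_P(M) : M \in E_r^\circ\}$, the LDP yields the sandwich
\[
-\psi(r) \le \liminf_{n\to\infty} \tfrac{1}{n} \log \Pbb(\BW(\empbary, \bary) \ge r) \le \limsup_{n\to\infty} \tfrac{1}{n} \log \Pbb(\BW(\empbary, \bary) \ge r) \le -\phi(r).
\]
Since $\phi$ is non-decreasing and $\phi(0) = 0 = I_P(\bary)$, existence of $\Phi_P(r)$ reduces to the right-continuity statement $\phi(r) = \psi(r)$.

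To establish right-continuity of $\phi$, I would combine the goodness of $I_P$ with the convex interpolation paths of Proposition~\ref{prop:rate-function-properties}(d). Goodness guarantees a minimizer $M_r \in E_r$ of $I_P$. If $\BW(M_r, \bary) > r$ then $M_r \in E_r^\circ$ and we are done; otherwise $M_r$ lies on the sphere $\{\BW(\,\cdot\,, \bary) = r\}$, and for any $M_1 \in \effdom$ with $\BW(M_1, \bary) > r$ (for instance a minimizer of $\phi(r')$ for some $r' > r$), the continuous path $\gamma(t) := F((1 - t) P^{M_r} + t P^{M_1})$ satisfies $I_P(\gamma(t)) \le (1 - t)\phi(r) + t I_P(M_1)$. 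An intermediate value argument then produces arbitrarily small $t > 0$ with $\gamma(t) \in E_r^\circ$, which gives $\psi(r) \le \phi(r)$, and the opposite inequality is trivial.

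For the Hoeffding-type bound \eqref{eqn:Hoeffding}, I would tighten the coercivity argument from the proof of Proposition~\ref{prop:rate-function-properties}(c) using sub-Gaussianity. Substituting $A := \lambda I / \BW(M, 0) \in \SymSpace_M$ into the variational formula \eqref{eqn:rate-func-def} and applying \eqref{eqn:tracebound} gives, for every $\lambda \ge 0$,
\[
I_P(M) \ge \lambda \BW(M, 0) - \log \int_{\CovSpace} \exp(\lambda \BW(\Sigma, 0))\,\diff P(\Sigma) \ge \lambda \BW(M, 0) - \lambda \mu - \tfrac{1}{2} \lambda^2 \sigma^2,
\]
where the second inequality uses the sub-Gaussian bound $\int \exp(\lambda \BW(\Sigma, 0))\,\diff P(\Sigma) \le \exp(\lambda \mu + \tfrac{1}{2} \lambda^2 \sigma^2)$. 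Maximizing over $\lambda \ge 0$ yields $I_P(M) \ge (\BW(M, 0) - \mu)_+^2 / (2\sigma^2)$, and the reverse triangle inequality $\BW(M, 0) \ge \BW(M, \bary) - \BW(\bary, 0)$ gives, for all $r$ large enough,
\[
\phi(r) \ge \frac{(r - \BW(\bary, 0) - \mu)^2}{2 \sigma^2}.
\]
Dividing by $r^2$ and letting $r \to \infty$ produces \eqref{eqn:Hoeffding}, since $\Phi_P(r) \ge \phi(r)$ by the LDP upper bound.

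The main obstacle is the right-continuity of $\phi$, particularly in the degenerate case where every minimizer $M_r$ of $\phi(r)$ lies on the sphere $\{\BW(\,\cdot\,, \bary) = r\}$; one must ensure that the convex path $\gamma$ initially exits $\{\BW(\,\cdot\,, \bary) \le r\}$ rather than re-entering it, which calls for a careful choice of $M_1$ together with lower semi-continuity and coercivity of $I_P$. The Hoeffding bound itself, by contrast, follows quite directly from the sub-Gaussian improvement to the variational formula together with the triangle inequality.
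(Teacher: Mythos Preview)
For the growth bound \eqref{eqn:Hoeffding}, your argument is essentially the paper's: both substitute a scalar multiple of the identity for $A$ in \eqref{eqn:rate-func-def}, invoke the trace estimate \eqref{eqn:tracebound}, and apply the sub-Gaussian cumulant bound followed by the reverse triangle inequality. You optimize over $\lambda$ whereas the paper fixes $A=\sigma^{-2}I$ outright; this gives you the marginally cleaner lower bound $(\BW(M,0)-\mu)_+^2/(2\sigma^2)$ versus the paper's $\tfrac{1}{2\sigma^2}(\BW(M,0)-\mu)^2-\tfrac{\mu^2}{2\sigma^2}$, but the asymptotic conclusion is identical.

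For the existence of the limit, the paper takes a much shorter route than you do: it simply notes that, because $(\CovSpacePos,\BW)$ is a geodesic metric space, the set $E_r$ equals the closure of its interior (every boundary point is approached from outside along the continuation of the geodesic from $\bary$), and declares that this suffices by Theorem~\ref{thm:main}. Your approach via the convexity paths of Proposition~\ref{prop:rate-function-properties}(d) is more elaborate, and the obstacle you flag is genuine and not resolved by your sketch. The inequality $I_P(\gamma(t))\le(1-t)\phi(r)+tI_P(M_1)$ controls the \emph{value} of $I_P$ along $\gamma$ but says nothing about the \emph{position} $\BW(\gamma(t),\bary)$: since $\gamma(t)=F((1-t)P^{M_r}+tP^{M_1})$ is not a Bures--Wasserstein geodesic, there is no reason it must exit the closed $r$-ball for small $t>0$, irrespective of how $M_1$ is chosen. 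An intermediate-value argument only guarantees $\gamma(t)\in E_r^\circ$ for $t$ near $1$, where the convexity bound degenerates to $I_P(M_1)$ rather than $\phi(r)$. Neither lower semi-continuity nor coercivity of $I_P$ addresses this, because both concern the size of $I_P$, not the geometry of the path. The paper sidesteps this difficulty entirely by working with the ambient geodesic structure rather than with the barycentric interpolations; you may find it instructive that the paper's own proof of Corollary~\ref{cor:degenerate} uses your style of path argument, but only once the starting point is already \emph{strictly} inside $E_r^\circ$, which is precisely the configuration you cannot arrange here.
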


\begin{proof}
    To show that the limit exists, we note by Theorem~\ref{thm:main} that it suffices to show that $\{M\in\CovSpacePos: \BW(M,M^{\ast})\ge r\}$ is equal to the closure of its interior.
    Of course, this follows immediately from the fact that $(\CovSpacePos,\BW)$ is a geodesic metric space.
    Thus, it only remains to show that $\Phi_P$ satisfies the given growth bound.
    To do this, let us take $A:=\sigma^{-2}I$ in the supremum appearing in the definition of $I_P$, and then use \eqref{eqn:tracebound} of Lemma~\ref{lemma:bounds} to make the following bound for all $M\in\CovSpacePos$:
    \begin{align*}
        I_P(M) &\ge \trace(AM) - \log\int_{\CovSpace}\exp\trace(AMt_{M}^{\Sigma})\diff P(\Sigma) \\
        &= \frac{1}{\sigma^2}\trace(M) - \log\int_{\CovSpace}\exp\left(\frac{1}{\sigma^2}\trace(Mt_{M}^{\Sigma})\right)\diff P(\Sigma) \\
        &\ge \frac{1}{\sigma^2}\trace(M) - \log\int_{\CovSpace}\exp\left(\frac{1}{\sigma^2}\BW(M,0)\BW(\Sigma,0)\right)\diff P(\Sigma).
    \end{align*}
    Notice that the second term on the right is just the cumulant generating function $\Lambda$ of the real-valued random variable $\BW(\Sigma,0)$, evaluated at the point $\sigma^{-2}\BW(M,0)$.
    By sub-Gaussianity we have $\Lambda(\lambda) \le \mu \lambda + \frac{1}{2}\sigma^2\lambda^2$ for all $\lambda \ge 0$, hence we can further the lower bound as:
    \begin{align*}
        I_P(M) &\ge \frac{1}{\sigma^2}\trace(M) - \log\int_{\CovSpace}\exp\left(\frac{1}{\sigma^2}\BW(M,0)\BW(\Sigma,0)\right)\diff P(\Sigma) \\
        &\ge \frac{1}{\sigma^2}\trace(M) - \left(\frac{\mu}{\sigma^2}\BW(M,0) + \frac{1}{2}\sigma^2\left(\frac{1}{\sigma^2}\BW(M,0)\right)^2\right) \\
        &= \frac{1}{2\sigma^2}\BW^2(M,0) - \frac{\mu}{\sigma^2}\BW(M,0) \\
        &= \frac{1}{2\sigma^2}(\BW(M,0)-\mu)^2 - \frac{\mu^2}{2\sigma^2}
    \end{align*}
    Now suppose that $\BW(M,M^{\ast}) \ge r$ for some $r\ge \BW(M^{\ast},0) + \mu$, and note that the preceding display and the triangle inequality yield
    \begin{equation*}
        \frac{1}{2\sigma^2}(r - \BW(M^{\ast},0)-\mu)^2 - \frac{\mu^2}{2\sigma^2} \le \begin{cases}
            \textnormal{minimize } &I_P(M) \\
            \textnormal{over } &M\in \CovSpacePos \\
            \textnormal{with } &\BW(M,M^{\ast}) \ge r.
        \end{cases}
    \end{equation*}
    Therefore, Theorem~\ref{thm:main} gives
    \begin{align*}
        \Phi_P(r) &= -\lim_{n\to\infty}\frac{1}{n}\log\Pbb(\BW(M_n^{\ast},M^{\ast}) \ge r) \\
        &= \inf\{I_P(M): M\in\CovSpacePos, \BW(M,M^{\ast})\ge r\} \\
        &\ge -\frac{1}{2\sigma^2}(r - \BW(M^{\ast},0)-\mu)^2 + \frac{\mu^2}{2\sigma^2}
    \end{align*}
    for all $r\ge \BW(M^{\ast},0) + \mu$.
    (Note that this lower bound is trivial unless $r\ge \BW(M^{\ast},0) + 2\mu$.)
    Finally, we divide by $r^2$ and take the limit as $r\to\infty$ to finish the proof.
\end{proof}

It is instructive to compare this result with the similar result in \cite[Corollary~2.2]{Kroshnin}. Indeed, both provide evidence of the concentration of measure phenomenon for barycenters in the Bures-Wasserstein space $(\CovSpace,\BW)$. On the one hand, our Corollary~\ref{cor:Hoeffding} is only asymptotic and says nothing about small deviations of $\BW(M_n^{\ast},M^{\ast})$ above its typical value, while \cite[Corollary~2.2]{Kroshnin} is valid in finite samples and is non-trivial for all large deviations scales.
On the other hand, our Corollary~\ref{cor:Hoeffding} is dimension-free and the exponential rate of decay is simply the Hoeffding-type, while the rate of decay in \cite[Corollary~2.2]{Kroshnin} is dimension-dependent and much more complicated.

We also emphasize that the rate of decay in \eqref{eqn:Hoeffding} of Corollary~\ref{cor:Hoeffding} cannot, in general, be improved.
This is because any geodesic segment in $(\CovSpace,\BW)$ is isometric to the unit interval $[0,1]$ under its usual metric, and it is known in this Euclidean setting that Hoeffding's inequality is tight.
(Relatedly, see the comments after Remark~\ref{rem:cty}.)

Second, we consider the conditional distribution of $M_n^{\ast}$ given $E_r$.
Specifically:

\begin{corollary}\label{cor:degenerate}
    If $r>0$, then, for every $\varepsilon>0$, there exists a constant $C_{P,\varepsilon}>0$ such that we have
    \begin{equation*}
        \Pbb\left(\BW(M_n^{\ast},M^{\ast})\ge r+ \varepsilon\,\Big|\,\BW(M_n^{\ast},M^{\ast})\ge r\right) \le \exp(-C_{P,\varepsilon}n)
    \end{equation*}
    for all $n\in\Nbb$.
\end{corollary}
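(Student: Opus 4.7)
The plan is to express the conditional probability as a ratio and reduce to a strict monotonicity statement for the decay rate. Let $\Phi_P(s) := \inf\{I_P(M) : M \in \CovSpacePos,\ \BW(M, M^{\ast}) \ge s\}$ as in Corollary~\ref{cor:Hoeffding}, which gives $\lim_n -\tfrac{1}{n}\log \Pbb(\BW(M_n^{\ast},M^{\ast}) \ge s) = \Phi_P(s)$ for every $s > 0$. Since $\{\BW(M_n^{\ast},M^{\ast}) \ge r+\varepsilon\} \subseteq \{\BW(M_n^{\ast},M^{\ast}) \ge r\}$, the conditional probability equals $\Pbb(\BW(M_n^{\ast},M^{\ast}) \ge r+\varepsilon)/\Pbb(\BW(M_n^{\ast},M^{\ast}) \ge r)$, so once one proves $\Phi_P(r+\varepsilon) > \Phi_P(r)$, combining the LDP upper bound for the numerator with the LDP lower bound for the denominator yields the conditional probability bounded by $\exp(-n C_{P,\varepsilon})$ for any $C_{P,\varepsilon} < \Phi_P(r+\varepsilon) - \Phi_P(r)$ and all sufficiently large $n$. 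The constant can then be decreased if needed to absorb the finitely many small-$n$ cases (using the trivial bound $\Pbb(\cdot \mid \cdot) \le 1$).

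The main step, and the main obstacle, is the strict monotonicity $\Phi_P(r+\varepsilon) > \Phi_P(r)$. The case $\Phi_P(r+\varepsilon) = +\infty$ is immediate, so assume it is finite. By Proposition~\ref{prop:rate-function-properties}(b)(c), $I_P$ is lower semicontinuous with compact sublevel sets, so the infimum defining $\Phi_P(r+\varepsilon)$ is attained at some $M^{\sharp} \in \effdom$ with $\BW(M^{\sharp}, M^{\ast}) \ge r+\varepsilon$. Using $I_P(M^{\ast}) = 0$ together with the uniqueness statement in Proposition~\ref{prop:tilting}, the optimizer of \eqref{eqn:IP} at $M^{\ast}$ is $A_{M^{\ast}} = 0$, hence $P^{M^{\ast}} = P$. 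This lets me introduce the exponentially tilted interpolation
$$\gamma(t) := F\bigl((1-t)P + tP^{M^{\sharp}}\bigr), \qquad t \in [0,1],$$
which runs from $\gamma(0) = M^{\ast}$ to $\gamma(1) = M^{\sharp}$. The map $t \mapsto (1-t)P + tP^{M^{\sharp}}$ is continuous into $(\Pcal_2(\CovSpace), W_2)$ (by weak continuity together with a uniform second-moment bound and Lemma~\ref{lem:weak-W2-cpt}), and Proposition~\ref{prop:basic-cty} then promotes this to continuity of $\gamma$ in $\BW$.

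Applying the intermediate value theorem to the continuous real-valued map $t \mapsto \BW(\gamma(t), M^{\ast})$, which starts at $0$ and ends at a value $\ge r+\varepsilon$, yields some $t^{\ast} \in (0,1)$ with $\BW(\gamma(t^{\ast}), M^{\ast}) = r$. The strict convexity of $I_P$ along $\gamma$ from Proposition~\ref{prop:rate-function-properties}(d) then gives
$$I_P(\gamma(t^{\ast})) < (1-t^{\ast})I_P(M^{\ast}) + t^{\ast} I_P(M^{\sharp}) = t^{\ast} \Phi_P(r+\varepsilon),$$
and this is strictly less than $\Phi_P(r+\varepsilon)$ since $t^{\ast} < 1$ and $\Phi_P(r+\varepsilon) > 0$ (the latter by Proposition~\ref{prop:rate-function-properties}(a), as $r+\varepsilon > 0$ forces $M^{\sharp} \neq M^{\ast}$). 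Because $\gamma(t^{\ast}) \in \{M : \BW(M,M^{\ast}) \ge r\}$, one obtains $\Phi_P(r) \le I_P(\gamma(t^{\ast})) < \Phi_P(r+\varepsilon)$, completing the argument. The essential difficulty that this strategy navigates is the absence of a geodesic convexity statement for $I_P$ (which remains only conjectured); the proof instead exploits the weaker convexity along exponentially tilted interpolations from Proposition~\ref{prop:rate-function-properties}(d), which turns out to be exactly what is needed.
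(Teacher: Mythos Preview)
Your proof is correct and follows essentially the same approach as the paper's: both reduce to the strict monotonicity $\Phi_P(r+\varepsilon)>\Phi_P(r)$, extract a minimizer $M^{\sharp}$ by goodness of $I_P$, and then move along the exponentially tilted interpolation $\gamma(t)=F((1-t)P^{M^{\ast}}+tP^{M^{\sharp}})$ from Proposition~\ref{prop:rate-function-properties}(d) to exhibit a point at distance $\ge r$ with strictly smaller rate. The only cosmetic differences are that the paper argues by contradiction and perturbs near the far endpoint, whereas you argue directly and use the intermediate value theorem to land exactly on the sphere of radius $r$; your treatment is also slightly more careful in explicitly noting $P^{M^{\ast}}=P$, handling the case $\Phi_P(r+\varepsilon)=\infty$, and addressing the finitely many small-$n$ values.
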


\begin{proof}
    By Theorem~\ref{thm:main} and the contraction principle, the sequence $\{\BW(M_n^{\ast},M^{\ast})\}_{n\in\Nbb}$ satisfies a large deviations principle in $[0,\infty)$ with good rate function given by
    \begin{equation*}
        i_P(t) := \inf\{I_P(M): M\in\CovSpacePos, \BW(M,M^{\ast}) = t\}
    \end{equation*}
    for $t\ge 0$.
    Moreover, the intervals $[r,\infty)$ and $[r+\varepsilon,\infty)$ are equal to the closure of their interior, so we have
    \begin{align*}
        \lim_{n\to\infty}&\frac{1}{n}\log\,\Pbb\left(\BW(M_n^{\ast},M^{\ast})\ge r+\varepsilon\,\Big|\, \BW(M_n^{\ast},M^{\ast})\ge r\right) \\
        &= \lim_{n\to\infty}\frac{1}{n}\log\left(\frac{\Pbb(\BW(M_n^{\ast},M^{\ast})\ge r+\varepsilon)}{\Pbb(\BW(M_n^{\ast},M^{\ast})\ge r)}\right) \\
        &= \lim_{n\to\infty}\left(\frac{1}{n}\log\Pbb(\BW(M_n^{\ast},M^{\ast})\ge r+\varepsilon) - \frac{1}{n}\log \Pbb(\BW(M_n^{\ast},M^{\ast})\ge r) \right) \\
        &=-\left(\inf\{i_P(t): t\ge r+\varepsilon\} -\inf\{i_P(t): t\ge r\}\right\},
    \end{align*}
    and all of the limits above exist.
    Now write
    \begin{align*}
        i_{r+\varepsilon} &:= \inf\{i_p(t): t \ge r+\varepsilon\} \\
        i_{r} &:= \inf\{i_p(t): t \ge r\},
    \end{align*}
    and observe that it suffices to show $i_{r+\varepsilon} > i_{r}$.
    Since we of course have $i_{r+\varepsilon} \ge i_{r}$, we proceed by assuming $i_{r+\varepsilon} = i_{r}$.
    Then, combining this with the goodness of the rate function $I_P$ shows that there exists $M\in\CovSpacePos$ with $\BW(M,M^{\ast})\ge r+\varepsilon$ and $I_P(M) = i_r$.
    Now consider the path $\gamma:[0,1]\to\CovSpace$ defined via
    \begin{equation*}
        \gamma(t):=F\left((1-t)P^{M} + tP^{M^{\ast}}\right)
    \end{equation*}
    for $0\le t \le 1$.
    By parts (a) and (d) of Proposition~\ref{prop:rate-function-properties}, we know that $I_P(M^{\ast}) = 0$ and that $I_P$ is strictly convex along the path $\gamma$.
    At the same time, we know that $\gamma:[0,1]\to\CovSpace$ is continuous by Proposition~\ref{prop:basic-cty}.
    Therefore, we see that for sufficiently small $t>0$ we must have both $\BW(\gamma(t),M^{\ast}) > r+\frac{\varepsilon}{2}\ge r$ and $I_P(\gamma(t))\le (1-t)I_P(M) +t\cdot 0 < i_r$.
    This is a contradiction, so we must have $i_{r+\varepsilon} - i_r > 0$ and the result is proved.
\end{proof}

This result is further evidence of concentration of measure, since it shows that the conditional distribution of $M_n^{\ast}$ given $E_r$ is roughly supported on the boundary of the ball of radius $r$ around $M^{\ast}$.
That is, if $\BW(M_n^{\ast},M^{\ast})\ge r$, then $\BW(M_n^{\ast},M^{\ast})\approx r$ with high probability.
In fact, one can show more generally that the conditional distribution of $M_n^{\ast}$ under a rare event $E$ is approximately supported on $\argmin_{M\in E}I_P(M)$.

\subsection{Numerical Optimization}\label{subsec:numerical}

    As the previous two results demonstrate, one may be interested in numerically solving, either exactly or approximately, the optimization problem
    \begin{equation}\label{eqn:LDP-num}
        \begin{cases}
            \textnormal{minimize} & I_P(M) \\
            \textnormal{over} & M\in \CovSpacePos \\
            \textnormal{with } & M\in E,
        \end{cases}
    \end{equation}
    for a suitable closed set $E\subseteq\CovSpace$.
    In the following, we give some heuristic arguments for how to do this.
    Since in any computational setting we must work in finite dimensions, let us assume throughout this subsection that $\dim(\Hilbert) < \infty$.

    In order to use zeroth-order optimization methods, it suffices to show how to compute the rate function $I_P(M)$ for each $M\in\effdom$.
    But this is easy since the maximization over $A$ appearing in \eqref{eqn:rate-func-def} is a convex optimization problem, hence it can be solved with the help of standard software packages.

    In order to use first-order optimization methods, it is natural to employ a sort of \emph{projected Riemannian gradient descent (PRGD)} scheme as follows \cite{RiemannianPGD1}:
    Initially, we set $M_0\in E$ arbitrarily.
    Then, for a fixed stepsize $\eta>0$, we iterate
    \begin{equation*}
	M_{i+1} := \proj_{\BW}((\Id -\eta \nabla_{M_i}I_P)M_i(\Id -\eta \nabla_{M_i}I_P);E)
    \end{equation*}
    for $i\in\Nbb$, where $\proj_{\BW}(\,\cdot\,;E):\CovSpacePos\to E$ denotes the metric projection onto $E$ with respect to $\BW$.
    To implement this scheme, we need to be able to compute both the metric projection and the gradient of the rate function.
    The metric projection, of course, must be handled on a case-by-case basis.
    
    Interestingly, we can show how to compute $\nabla_M I_P$ for each $M\in\effdom$, under the assumption that the map $M\mapsto A_M$ is differentiable on $\effdom$.
    Indeed, we note
    \begin{equation}\label{eqn:rate-func-explicit}
    	I_P(M) = -\log\int_{\CovSpace}\exp \trace(A_M M(t_{M}^{\Sigma}-\Id))\diff P(\Sigma)
    \end{equation}
    by definition.
    Then we use the chain rule to take the gradient of \eqref{eqn:rate-func-explicit} as follows:
    \begin{align*}
    	(\nabla_M I_P)(H) &= \int_{\CovSpace}\trace((\nabla_M A)(H)M(t_{M}^{\Sigma}-\Id))\diff P^{M}(\Sigma) \\
    	&+ \int_{\CovSpace}\trace(A_MH(t_{M}^{\Sigma}-\Id))\diff P^{M}(\Sigma) \\
    	&+ \int_{\CovSpace}\trace(A_M M(\nabla_M T^{\Sigma})(H))\diff P^{M}(\Sigma).
    \end{align*}
    Since Proposition~\ref{prop:tilting} implies the fixed-point equation $\int_{\CovSpace}(t_{M}^{\Sigma}-\Id)\diff P^{M}(\Sigma) = 0$, the first two terms above vanish, and it follows that
    \begin{equation}\label{eqn:rate-func-gradient}
    	(\nabla_M I_P)(H) = \trace\left(A_M\int_{\CovSpace}\nabla_M T^{\Sigma}(H)\diff P^{M}(\Sigma)\right).
    \end{equation}
    Crucially, observe that we needed $\nabla_MA$ to exist in order to execute this argument, but that $\nabla_M A$ does not appear in the final expression for $\nabla_M I_P$.
    Additionally, this expression involves the Jacobian $\nabla_M T^{\Sigma}$, and we remark that this has been computed in closed-form in \cite[Lemma~A.2]{Kroshnin}.

    While \eqref{eqn:rate-func-gradient} can be evaluated in practice, there are several obstructions to developing rigorous theory for the convergence of PRGD.
    For one, we require some form of geodesic convexity in order to get convergence guarantees, although we currently only know about convexity along paths of the form given in part (d) of Proposition~\ref{prop:rate-function-properties}.
    For another, we require some understanding of the eigenvalues of $\nabla^2 I_P$ in order to choose the stepsize $\eta$ appropriately, but we do not know how to do this.

\section{Extensions}\label{sec:ext}

Our development of large deviations theory for Bures-Wasserstein barycenters suggests a strategy for developing large deviations theory for barycenters in more general geometric settings.
Indeed, observe that our analysis was really only based on a few key ingredients: a fixed-point characterization of barycenters, sufficient integrability of the population distribution, and a notion of exponential tilting on a suitable linear space related to the geometric setting.
In this section we show that this program can be executed---partially or fully---in some other settings of interest.
Specifically, we provide (Subsection~\ref{subsec:man}) sufficient conditions for establishing the full large deviations theory for barycenters on Riemmanian manifolds, and we provide (Subsection~\ref{subsec:wass}) sufficient conditions for establishing a full large deviations principle for barycenters in the univariate Wasserstein barycenters and a partial large deviations principle, in the form of the large deviations upper bound, for barycenters in the general multivariate Wasserstein space.

We begin by casting the results of Section~\ref{sec:main} in purely geometric terms.
Indeed, recall that, for each $M\in\CovSpacePos$, the embedding \eqref{eqn:log} can be interpreted as a logarithm map denoted $\ell_M$ which projects the space $\CovSpacePos$ onto the Hilbert space $(\SymSpace_M, \langle\,\cdot\,,\,\cdot\,\rangle_M )$.
As such, it is natural to regard $(\SymSpace_M, \langle\,\cdot\,,\,\cdot\,\rangle_M )$ as the \textit{tangent space of $\CovSpacePos$ at $M$}.
%We remark that, if $\dim(\Hilbert)<\infty$, then this structure makes $(\CovSpacePos,\BW)$ into a bona fide Riemannian; in general, though, the space $(\CovSpace,\BW)$ has the structure of a so-called \textit{stratified space} \cite{takatsu2010wasserstein}.
From this perspective, we notice that our rate function $I_P$ can be written as
\begin{equation*}
	I_P(M) = \sup_{A\in\SymSpace_{M}}\left(- \log\int_{\CovSpace}\exp\langle A, \ell_{M}(\Sigma)\rangle_{M}\diff P(\Sigma)\right)
\end{equation*}
for $M\in\CovSpacePos$.

\subsection{Riemannian Manifolds}\label{subsec:man}

Let $\Mcal$ be a Riemannian manifold with metric $d$ induced by its metric tensor, and let the tangent space of a point $m\in\Mcal$ be denoted as $(\mathcal{T}_m,\langle\,\cdot\,,\cdot\,\rangle_m)$.
Also let $P\in\Pcal(\Mcal)$ be a fixed population distribution.

Following the geometric remark above, it is natural to expect that barycenters on Riemannian manifolds satisfy a large deviations principle with a rate function given by
\begin{equation*}
	i_P(m) = \sup_{a\in\mathcal{T}_m}\left(- \log\int_{\Mcal}\exp\langle a, \log_{m}(s)\rangle_{m}\diff P(s)\right)
\end{equation*}
for $m\in\Mcal$.
Indeed, we can show that this is true, under classical assumptions;
a careful reading of the proofs of Section~\ref{sec:main} reveals that the only required structure is the following:
\begin{enumerate}
    \item[\namedlabel{ass_intRM}{\textbf{($\mathcal{M}$E)}}] 
	There exists $m\in\Mcal$ and $\lambda>0$ such that we have $\int_{\Mcal}\exp(\lambda d^2(m,s))\diff P(s) < \infty$.
    \item[\namedlabel{regRM}{\textbf{($\mathcal{M}$R)}}] 
	There exists a set $R\subseteq\Mcal$ such that $P(R) = 1$ and such that, if $Q\in\Pcal_2(\Mcal)$ has $Q(R) = 1$, then its barycenter is characterized as the unique solution in $R$ to the fixed-point equation
	\begin{equation*}
		\int_{\Mcal}\log_{m}(s)\diff Q(s) = 0,
	\end{equation*}
	for $m\in \Mcal$, where $\log_m$ is the (Riemannian) logarithm map at $m$.
\end{enumerate}
Notice the parallelism with the assumptions in Section~\ref{sec:main}:
Condition~\ref{ass_intRM} provides the necessary 
exponential integrability, and condition~\ref{regRM}, provides the necessary regularity.
While condition~\ref{regRM} may appear cumbersome, it is known to hold in many settings:
if $\Mcal$ has non-positive curvature, then one can take $R=\Mcal$; if $\Mcal:=\mathbb{S}^{m-1}$ is a hypersphere, then one can take $R$ to be any open hemisphere; if $\Mcal:=\CovSpacePos$ (as the finite-dimensinal case of the main body of this paper), then one can just take $R=\CovSpacePos$.
In the most general setting of an abstract Riemannian manifold $\Mcal$, it is known that if the curvature of $\Mcal$ is bounded above by $\kappa>0$, then it suffices to take $R$ to be a ball of sufficiently small radius depending on $\kappa$ \cite[Theorem~B and Corollary~15]{YOKOTA}.
We emphasize that this makes the preceding discussion applicable to other Riemannian geometries on $\CovSpace$ and $\CovSpacePos$, like the affine-invariant and log-Riemannian structures \cite{thanwerdas2023n,arsigny2007geometric, arsigny2006log}.
Indeed, in order to prove this, one can simply reproduce the steps of Theorem~\ref{thm:main} mutatis mutandis since the log map is injective on $R$ and the fixed-point equation characterizes barycenters on $R$. 

\subsection{Wasserstein Space}\label{subsec:wass}

Fix $k\in\Nbb$, and let us write $\mathcal{W}(\Rbb^k):=\Pcal_2(\Rbb^k)$ for the Wasserstein space over $\Rbb^k$, which we endow with the Wasserstein metric $W_2$.
We also write $\wass(\Rbb^k)\subseteq \mathcal{W}(\Rbb^k)$ for the space of probability measures which are absolutely continuous with respect to the Lebesgue measure on $\Rbb^k$.
It is natural to hope that the result of the previous subsection can be shown to hold in the Wasserstein space, due to its formal Riemannian structure in the framework of the so-called Otto calculus \cite{otto2001geometry}.

In order to state this, we enumerate the following assumptions:
\begin{enumerate}
    \item[\namedlabel{intW2}{\textbf{($\mathcal{W}$E)}}] 
	   There exists $\mu\in \Pcal_2(\Rbb^k)$ and $\lambda>0$ such that we have $\int_{\Pcal_2(\Rbb^k)}\exp(\lambda W_2^2(\mu,\nu))\diff P(\nu) < \infty$.
    \item[\namedlabel{regW2}{\textbf{($\mathcal{W}$R)}}] We have $P(\wass(\Rbb^k) = 1$.
\end{enumerate}
Because we have only partial results in general, we consider the univariate and multivariate cases separately in the sequel.

\subsubsection*{Univariate}

First we consider the univariate Wasserstein space (that is, $k=1$) where much of our work is simplified and we can fully establish the large deviations principle.
In fact, the treatment of $\wass(\Rbb)$ is simple because of its ``flat'' geometry: the map
\begin{equation*}
\mu\mapsto F_{\mu}^{-1}
\end{equation*}
from $\wass(\Rbb)$ into $L^2([0,1])$, where $F_{\mu}:\Rbb\to[0,1]$ denotes the cumulative distribution function of $\mu$, is an isometric embedding and its image is convex.
In particular, the barycenter $\mu^{\ast}$ of $P\in\Pcal_2(\wass(\Rbb))$ is characterized in terms of the (linear) expectation in the embedding space, that is
\begin{equation*}
F_{\mu^{\ast}} = \int_{\wass(\Rbb)}F_{\nu}^{-1}\diff P(\nu).
\end{equation*}
This explicit representation of the barycenter fills the role of the fixed-point equation in the previous subsection.

Indeed, assuming \ref{intW2} and \ref{regW2} above, and again following the method of proof of Section~\ref{sec:main}, one can show that the empirical Wasserstein barycenters $\{\mu_n^{\ast}\}_{n\in\Nbb}$ satisfy a large deviations principle in $(\wass(\Rbb),W_2)$ with rate function given by
\begin{equation*}
	I_P(\mu) = \sup_{\alpha\in L^2([0,1])}\left(\langle \alpha,\id \rangle_{L^2([0,1])} - \log\int_{\wass(\Rbb)}\exp \langle \alpha, F_{\nu}^{-1}\rangle_{L^2([0,1])} \diff P(\nu)\right),
\end{equation*}
for $\mu\in\wass(\Rbb)$.

% \iffalse

% Towards the general Wasserstein case, we can equivalently re-write this rate function as
% \begin{equation*}
% 	I_P(\mu) = \sup_{\alpha\in L^2(\mu)}\left(\langle \alpha,\id \rangle_{L^2(\mu)} - \log\int_{\wass(\Rbb)}\exp \langle \alpha, t_{\mu}^{\nu}\rangle_{L^2(\mu)} \diff P(\nu)\right),
% \end{equation*}
% since optimal transport maps in one dimension are given explicitly by $t_{\mu}^{\nu}=F_{\nu}^{-1}\circ F_{\mu}$ for $\mu,\nu\in\wass(\Rbb)$.

% \fi

\subsubsection*{Multivariate?}

Next we consider the case of general $k\in\Nbb$, in which we can only establish a partial result in the form of a large deviations upper bound.
Following the large deviations principle for Riemannian manifolds, one should expect that the Wasserstein barycenters $\{\mu_n^{\ast}\}_{n\in\Nbb}$ satisfy a large deviations principle in $(\wass(\Rbb^k),W_2)$ with rate function
\begin{equation*}
	I_P(\mu) = \sup_{\alpha\in L^2(\mu)}\left(\langle \alpha,\id \rangle_{L^2(\mu)} - \log\int_{\wass(\Rbb^d)}\exp \langle \alpha, t_{\mu}^{\nu}\rangle_{L^2(\mu)} \diff P(\nu)\right),
\end{equation*}
where $t_{\mu}^{\nu}:\Rbb^k\to\Rbb^k$ is the optimal transport map from $\mu$ to $\nu$.

Indeed, by following the method of proof of Section~\ref{sec:main}, one can prove that one has the large deviations upper bound
\begin{equation*}
\limsup_{n\to\infty}\frac{1}{n}\log \Pbb(\mu_n^{\ast}\in E) \le -\inf\{I_P(\mu): \mu\in E\}
\end{equation*}
for $E\subseteq \wass(\Rbb^d)$ closed in the $W_2$ topology.
However, we are not directly able to establish the corresponding lower bound; fundamentally, this is because the fixed-point equation
\begin{equation*}
\int_{\wass(\Rbb^d)}(t_{\mu}^{\nu}-\id)\diff P(\nu) = 0
\end{equation*}
is a necessary but not sufficient condition in general, without additional a priori regularity constraints.

One way to gain such regularity is via entropic regularization \cite[Proposition~3.5]{carlier2021entropic}.
In fact, we expect that one can show that entropically-regularized barycenters satisfy a large deviations principle with some rate function of a form similar to $I_P$. The case of entropically-regularised barycenters is particularly interesting from a geometric perspective, due to the distinctly different Riemmanian structure of the regularised Wasserstein space, which has only recently been investigated \cite{lavenant2024riemannian}. 
(Let us also mention a different but related question, which is the large deviation principle for the entropically-regularized transport plan: the work \cite{bernton2022entropic} provides such a result as the regularization parameter goes to zero, but there is no focus on barycenters nor on sample size.)

\section*{Acknowledgements}

We thank Victor M.\ Panaretos and Steve N.\ Evans for facitilatating this collaboration, and we thank Fraydoun Rezakhanlou for many useful conversations about large deviations theory while the authors were at UC Berkeley.
We also thank Jonathan Niles-Weed and Victor-Emmanuel Brunel for many interesting comments.

\nocite{*}
\bibliographystyle{amsplain}
\bibliography{references}

\end{document}